\setlist{itemsep=1pt,topsep=2pt,parsep=1pt}
\theoremstyle{plain}
\newtheorem{theorem}{Theorem}[section]
\newtheorem{proposition}[theorem]{Proposition}
\newtheorem{lemma}[theorem]{Lemma}
\newtheorem{remark}[theorem]{Remark}
\newtheorem{guess}[theorem]{Guess}
\newtheorem*{remarque*}{Remark}
\newcommand{\bbP}{\mathbb{P}}
\newcommand{\bbE}{\mathbb{E}}
\newcommand{\bbR}{\mathbb{R}}
\newcommand{\bbN}{\mathbb{N}}
\newcommand{\cZ}{\mathcal{Z}}
\newcommand{\cP}{\mathcal{P}}
\newcommand{\cF}{\mathcal{F}}
\renewcommand{\epsilon}{\varepsilon}
\newcommand{\gep}{\epsilon}
\newcommand{\gp}{\varphi}
\newcommand{\dr}{\mathrm{d}}
\newcommand{\dd}{\mathrm{d}}
\newcommand{\ind}{\bm{1}}
\newcommand{\sumtwo}[2]{\sum_{\substack{#1 \\ #2}}} 
\renewcommand{\tilde}{\widetilde}
\def\namedlabel#1#2{\begingroup
   \def\@currentlabel{#2}%
   \label{#1}\endgroup
}
\numberwithin{equation}{section}
\newcommand{\black}{\color{black}}
\author[*,$\dagger$]{Quentin Berger}
\author[$\ddagger$]{Lo\"ic B\'ethencourt}
\author[*]{Camille Tardif}
\affil[*]{\footnotesize Laboratoire de Probabilit\'e Statistique et Mod\'elisation, Sorbonne Universit\'e.}
\affil[$\dagger$]{\footnotesize D\'epartement de Math\'ematiques et Applications, \'Ecole Normale Sup\'erieure, PSL.}
\affil[$\ddagger$]{\footnotesize
Université Côte d’Azur, CNRS, LJAD, France.}
\title{On joint returns to zero of Bessel processes}
\date{}
\begin{document}

\maketitle

\vspace{-25pt}

\begin{abstract}
\noindent
In this article, we consider joint returns to zero of $n$ Bessel processes ($n\geq 2$): our main goal is to estimate the probability that they avoid having joint returns to zero for a long time. 
More precisely, considering $n$ independent Bessel processes $(X_t^{(i)})_{1\leq i \leq n}$ of dimension $\delta \in (0,1)$, we are interested in the first joint return to zero of any two of them:
\[
H_n := \inf\big\{ t>0,  \exists  1\leq i <j \leq n \text{ such that } X_t^{(i)} = X_t^{(j)} =0 \big\}  \,.
\]
We prove the existence of a persistence exponent $\theta_n$ such that $\bbP(H_n>t) = t^{-\theta_n+o(1)}$ as $t\to\infty$, and we provide some non-trivial bounds on $\theta_n$.
In particular, when $n=3$, we show that $2(1-\delta)\leq \theta_3 \leq 2 (1-\delta) + f(\delta)$ for some (explicit) function $f(\delta)$ with $\sup_{[0,1]} f(\delta) \approx 0.079$.
\end{abstract}

\begin{figure}[htbp]
\begin{center}
\includegraphics[scale=0.5]{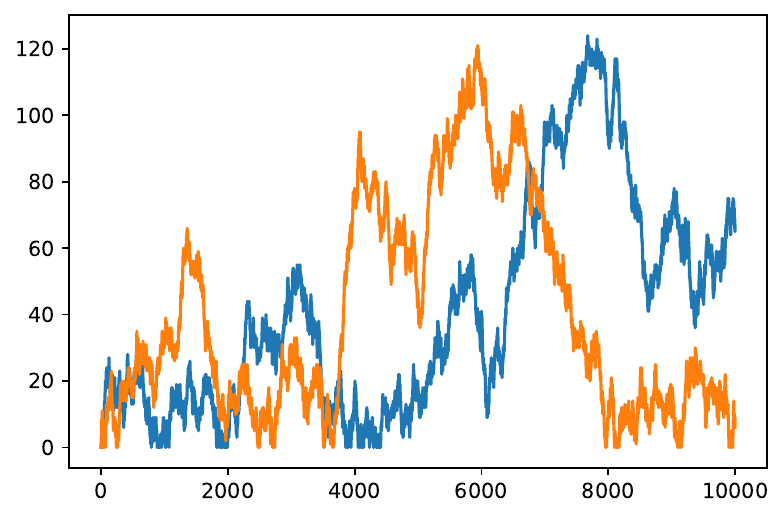}\qquad
\includegraphics[scale=0.5]{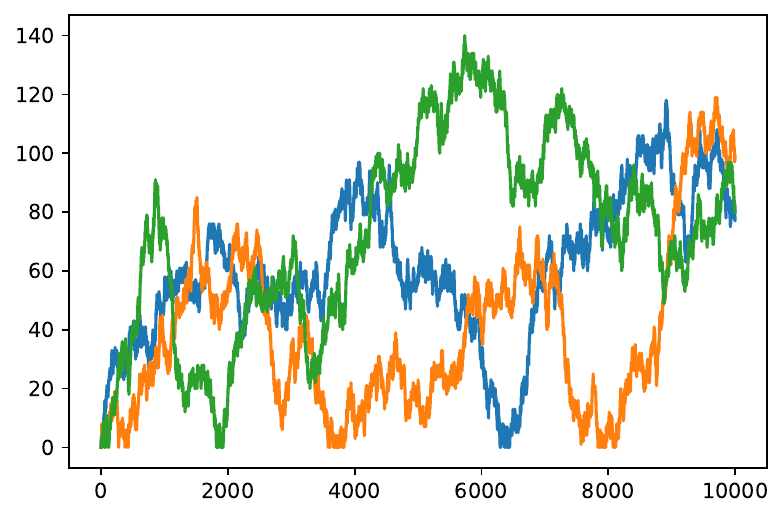}
\end{center}
\vspace{-\baselineskip}
\caption{\small On the left, two Bessel processes of dimension $\delta=3/4$ conditioned not to have joint returns to zero. On the right, three Bessel processes of dimension $\delta=3/4$ conditioned not to have (any) joint returns to zero.}
\end{figure}

\vspace{-10pt}

{\small
\setcounter{tocdepth}{2}

\tableofcontents
}

\section{Introduction and main result}

Let $n\geq 2$ be some fixed integer, and consider $X=(X^{(i)})_{1\leq i \leq n}$ independent squared Bessel processes of dimension $\delta$, 
\textit{i.e.}\  described by the evolution equations
\begin{equation}
\label{def:Bessels}
X_0^{(i)} =x_i\,,\quad 
\dd X_t^{(i)} = 2 \sqrt{X_t^{(i)}} \,\dd W_t^{(i)} + \delta \dd t \,,
\end{equation}
with $(W_t^{(i)})_{1\leq i \leq n}$ independent standard Brownian motions.
In other words, $(X_t)_{t\geq 0}$ is a diffusion in $(\bbR_+)^n$ with generator
\begin{equation}
\label{def:generator}
\mathcal{L}^{n}_{\delta} := 2 \sum_{i=1}^n x_i \frac{\partial^2}{\partial x_i^2} + \delta \frac{\partial}{\partial x_i} \,.
\end{equation}
We denote by $\bbP_x$ the law of $(X_t)_{t\geq 0}$ started from $x = (x_1,\ldots,x_n)$.
For $i,j \in \{1,\ldots, n\}$, $i\neq j$, let us denote
\[
\begin{split}
T_i & := \inf\big\{ t\geq 0, X_t^{(i)} =0\big\}  \,,\\ 
T_{i,j} & := \inf\big\{ t\geq 0, X_t^{(i)} = X_t^{(j)} =0 \big\}  = \inf\big\{ t\geq 0, X_t^{(i)} + X_t^{(j)} =0 \big\} \,,
\end{split}
\]
which are respectively the first return to $0$ of $X^{(i)}$
and the first \textit{joint return to $0$} of $X^{(i)}$ and $X^{(j)}$.
Then, it is classical, see e.g.\ \cite[p.~511]{MR4302463}, to obtain that if $\delta <2$, then for any fixed $i$,
\begin{equation}
\label{eq:oneprocess}
\bbP_{x}(T_i >t) \sim c_{x_i}\, t^{- (1- \frac12\delta)} \qquad \text{ as } t\to\infty\,,
\end{equation}
where the constant $c_{x_i}$ depends only on $x_i$ and $\delta$, and is given by
$
 c_{x_i} = \frac{x_i^{1 -\delta/2}}{2^{1-\delta/2}(1-\delta /2)\Gamma(1-\delta/2)} \,,
$
where $\Gamma$ is the usual gamma function. 

As far as \textit{joint returns} are concerned, for any fixed $i\neq j$, we have, if $\delta\in (0,1)$,
\begin{equation}
\label{eq:twoprocesses}
\bbP_x(T_{i,j}>t) \sim c_{x_i+x_j}\, t^{- (1-\delta)} \qquad \text{ as } t\to\infty\,.
\end{equation}
This can be viewed from the fact that $X^{(i)}+X^{(j)}$ is a squared Bessel process of dimension $\tilde \delta=2\delta$ with starting point $x_i+x_j$, see~\cite[Chap.~XI, Thm~1.2]{RY99}, so one can apply~\eqref{eq:oneprocess}.
We also refer to Section~\ref{sec:twoBessels} for more comments.

\smallskip
In this article, we consider the first joint return to $0$ of \textit{any two} of the $n$ squared Bessel processes, namely
\begin{equation}
\label{def:T}
H_n := \min\big\{ T_{i,j} ,\ 1\leq i < j \leq n \big\} \,.
\end{equation}
This can also be seen as the hitting time of the $(n-2)$-dimensional set $\mathcal{A} = \bigcup_{i\neq j} \{x_i=x_j=0\}$ by the $(\mathbb{R}_+)^n$-valued process $(X_t)_{t\geq 0}$.
In dimension $n=2$, this corresponds to the hitting time of the corner of the quadrant $(\bbR_+)^2$; in dimension $n=3$, this is the hitting time of one of the axis of the octant $(\bbR_+)^3$.

\begin{remark}
\label{rem:k}
One could also consider the hitting time by $(X_t)_{t\geq 0}$ of the $(n-k)$-dimensional set $\mathcal{A}^{(k)} = \bigcup_{|I|=k} \{x_i=0 \, \forall i\in I\}$, corresponding to simultaneous returns to $0$ of $k$ Bessel processes. We will make a few comments on this general case, but for simplicity we focus on the case $k=2$ in the rest of the paper.
\end{remark}

Our main goal is to estimate the tail probability $\bbP_x(H_n>t)$ as $t\to\infty$.
We will focus on the case where $\delta$ is in $(0,1)$, since in the case $\delta\geq 1$ we have $T_{i,j} =+\infty$ a.s.\ for all $i,j$, while for $\delta\leq 0$ squared Bessel processes are absorbed at $0$ (still, we discuss this case in Section~\ref{sec:delta-neg}).

We prove below that the \emph{persistence exponent} $\theta_n$ exists, see Proposition~\ref{prop-existence-theta}, \textit{i.e.}\ that we have, for any $x\notin \mathcal{A}$,
\[
\bbP_x(H_n>t) = t^{- \theta_n +o(1)} \quad \text{ as } t\to\infty \,.
\] 
The question is then to identify $\theta_n$;
a further question would be to obtain a sharper asymptotic behavior, for instance $\bbP_x(H_n>t) \sim c_{n,x}\, t^{-\theta_n}$.

In this article, we put some emphasis on the case $n=3$ for simplicity. Even if we are not able to determine the exponent~$\theta_3$, we prove non-trivial upper and lower bounds, showing that $2(1-\delta)\leq \theta_3 \leq 2 (1-\delta) + f(\delta)$ for some (explicit) function $f(\delta)$ with $\sup_{[0,1]} f(\delta) \approx 0.079$, see Theorem~\ref{thm:main} below.

\subsection{Some motivations}
\label{sec:motiv}

\paragraph{Spatial population with seed-bank and renewal processes.}
Our original motivation was a question raised by F.~den Hollander, in the context of renewal processes, in relation to models of populations with seed-banks \cite{BEGK15,BGKW16}, in particular in a multi-colony setting, see e.g.~\cite{GdHO22} (or the introduction of \cite{Oomen21} for an overview).
In these models, individuals can become dormant and stop reproducing and after some (random, possibly heavy-tailed) time they wake up, become active and start reproducing but only for a short period of time.
Roughly speaking, the times where individual from a seed-bank becomes active form a renewal process, and \textit{joint renewals} correspond to times when individuals become \textit{jointly active} and are able to interact and exchange genetic material.

Thus, understanding the tail behavior of the joint renewals is key in understanding the evolution of genetic variability in these models.
Our question would then amount to studying the tail probability of having no joint renewals for  \textit{any two individuals} in a given set of $n$ individuals.

\paragraph{Renewal processes on $\mathbb{N}$ and \textit{joint renewals}.}
Let us formulate the question of the previous paragraph directly in terms of renewal processes and make some comments.
Consider $n$ independent \emph{recurrent} renewal processes $(\tau^{(i)})_{1\leq i\leq n}$ on $\mathbb{N}_0=\{0,1,2,\ldots\}$: $\tau^{(i)} = \{\tau_k^{(i)}\}_{k\geq 0}$ is such that $\tau_0^{(i)} =0$ and $(\tau_k^{(i)}-\tau_{k-1}^{(i)})_{k\geq 1}$ are i.i.d.\ $\bbN$-valued random variables.
We can interpret $\tau^{(i)}$ as the \textit{activation times} of an individual in a seed-bank, or as the \textit{return times to $0$} of a Markov process.
We assume that $\bbP(\tau_1^{(i)} > t) \sim c_0 t^{-\alpha}$ as $t\to\infty$, for some $\alpha>0$ and some constant $c_0>0$. 
This is a natural \textit{fat tail} assumption for population with seed-bank, see~\cite{BGKS13} and it is also verified for the return times to $0$ of Bessel-like random walks, see~\cite{Ale11}; in particular, the parameter $\alpha$ is related to the dimension $\delta$ of the Bessel-like random walk\footnote{More precisely, $\frac1N \tau^{(i)}$ converges in distribution (as a closed subset of $[0,\infty)$) to a $\min(\alpha,1)$-stable regenerative set, see e.g.~\cite[\S~A.5.4]{Giac07}, which can be interpreted as the \textit{zero set} of a Bessel process of dimension $\delta$.} by the relation $\alpha = 1- \frac12 \delta$, see e.g.~\eqref{eq:oneprocess} (or equivalently $\delta=2(1-\alpha)<2$). 

Defining $\rho^{(i,j)} := \tau^{(i)} \cap \tau^{(j)}$ the \textit{joint renewals} of $\tau^{(i)}$ and $\tau^{(j)}$, then one easily have that $\rho^{(i,j)}$ is also a renewal process, which is recurrent if $\alpha >\frac12$ (which corresponds to $\delta<1$).
In the case $\alpha \in (\frac12,1)$ (which corresponds to $\delta\in (0,1)$), the renewal structure allows one to obtain the tail asymptotic $\bbP(\rho^{(i,j)} \cap (0,t] =\emptyset)$ thanks to a Tauberian theorem, simply by estimating the renewal function $U(t) = \sum_{s=1}^t \bbP(s \in \rho^{(i,j)}) = \sum_{s=1}^t \bbP(s \in \tau^{(i)})^2$: estimates on $\bbP(s \in \tau^{(i)})$ are available (see e.g.~\cite{CD19,GL62}) and after a short calculation one gets that $\bbP(\rho^{(i,j)} \cap (0,t] =\emptyset) \sim c_1 t^{- (2\alpha-1)} = c_1 t^{-(1-\delta)}$; we refer to~\cite{AB16} for details. 
The case $\alpha\geq 1$ is actually more delicate since one cannot apply a Tauberian theorem, but one has $\bbP(\rho^{(i,j)} \cap (0,t] =\emptyset) \sim c_1' t^{- \alpha}$, see~\cite[Thm.~1.3-(iii)]{AB16}. 
We refer to~\cite{AB16} for an overview of results on the intersection of two renewal processes.

However, if there are $n$ renewal processes and if we define the set of \textit{joint renewals} as $\rho := \{ s\in \mathbb{N}_0 ,  \exists 1\leq i<j\leq n ,    s \in \tau^{(i)} \cap \tau^{(j)}\}$, then $\rho$ is \emph{not} a renewal process anymore if $n\geq 3$.
Then, it is not clear how to estimate the tail probability $\bbP(\rho \cap (0,t] = \emptyset)$ and  the goal of the present article is precisely to give an idea on how this probability should decay, since it is natural to expect that $\bbP(\rho \cap (0,t] = \emptyset) \approx \bbP(H_n>t)$, with squared Bessels of dimension $\delta := 1-2\alpha$.

\paragraph{A toy model for collisions of particles.}
Another source of motivation for studying joint returns to $0$ is that one can interpret the instant $T_{i,j}$ as the first \textit{collision time} between two particles $i,j$ --- for instance one could interpret $Y_{i,j}:=X^{(i)}+X^{(j)}$ as the distance between particles $i,j$.
This is of course a toy model of particle systems since particles have not much interaction, but the question is already interesting (and difficult) because of the intricate relation between the processes $Y_{i,j}$.

In the following, we sometimes call an instant $t$ such that $X_t^{(i)}=X_t^{(j)}=0$ a \emph{collision} between particles $i$ and $j$.
In this framework, our question consists in studying the large deviation probability of having \textit{no collision (of any pair of particles)} for a long time.
We have in mind several models where such a question is natural, such as mutually interacting Brownian of Bessel processes\footnote{Also related to Dyson's Brownian motion and Dunkl processes, see e.g.\ \cite{Demni_rev} for an overview.}, see e.g.\ \cite{CL97,CL01,CL07},
or Keller--Segel particles systems, see e.g.\ \cite{FJ17,FT21} --- note that both models feature (squared) Bessel processes.

\subsection{Main results: joint returns to zero of $n\geq 3$ Bessel processes}
\label{sec:mainresult}

We now turn to the case of $n\geq 3$ Bessel processes and state our main result. Recall the definition~\eqref{def:T} of~$H_n$, the hitting time of $\mathcal{A}=\bigcup_{i\neq j} \{x_i=x_j=0\}$.
First of all, we show the existence of the \emph{persistence exponent} $\theta_n$.

\begin{proposition}\label{prop-existence-theta}
There is some $\theta_n \geq 0$, that depends on $\delta$ but not on the starting point $x \in (\bbR_+)^n\setminus \mathcal{A}$, such that 
\[
\lim_{t\to\infty} \frac{1}{\log t} \log \bbP_x( H_n> t) =-\theta_n \,.
\]
In other words, $\bbP_x( H_n> t) = t^{-\theta_n +o(1)}$ as $t\to\infty$.
\end{proposition}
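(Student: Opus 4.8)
The plan is to establish the existence of the limit via a subadditivity (submultiplicativity) argument applied to the function $t \mapsto -\log \bbP_x(H_n > t)$, together with a check that the limit does not depend on the starting point $x \notin \mathcal{A}$. The two ingredients to combine are: (i) a Markov-property decomposition at an intermediate time, and (ii) a comparison that removes the $x$-dependence by absorbing it into a multiplicative constant.

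First I would reduce the $x$-dependence. Fix a reference point, say $x_\star = (1,1,\ldots,1)$. For an arbitrary starting point $x \notin \mathcal{A}$, since $(X_t)_{t\geq 0}$ is a diffusion on $(\bbR_+)^n$ that (for $\delta \in (0,1)$) reaches any neighborhood of $x_\star$ in finite time with positive probability while avoiding $\mathcal{A}$ (the components can be made to avoid simultaneous zeros on a short time interval with positive probability, since individually each $X^{(i)}$ spends Lebesgue-null time at $0$ and the processes are independent), there is a time $t_0 = t_0(x)$ and a constant $c = c(x) > 0$ such that, using the Markov property at time $t_0$ and a suitable harmonic-type lower bound on the transition density away from $\mathcal{A}$,
\[
\bbP_x(H_n > t_0 + t) \geq c(x)\, \bbP_{x_\star}(H_n > t) \qquad \text{for all } t \geq 0\,,
\]
and symmetrically with the roles of $x$ and $x_\star$ swapped (this uses that the law started near $x_\star$ dominates, up to a constant, the law started exactly at $x_\star$, via the strong Markov property at the hitting time of a small ball around $x_\star$). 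Taking $\frac1{\log t}\log$ of both sides and letting $t\to\infty$ shows that if the limit $-\theta_n$ exists for $x_\star$ it exists for every $x$ with the same value.

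Next, for the reference point I would set $\psi(t) := -\log \bbP_{x_\star}(H_n > t)$ and show it is approximately subadditive. By the Markov property at time $s$, for $s,t \geq 0$,
\[
\bbP_{x_\star}(H_n > s+t) = \bbE_{x_\star}\!\big[\ind_{\{H_n > s\}}\, \bbP_{X_s}(H_n > t)\big]\,.
\]
On the event $\{H_n > s\}$ the position $X_s$ lies in $(\bbR_+)^n \setminus \mathcal{A}$ but can be arbitrarily close to $\mathcal{A}$, so $\bbP_{X_s}(H_n > t)$ is not uniformly bounded below. To fix this I would further condition on $\{H_n > s, X_s \in K\}$ for a compact $K \subset (\bbR_+)^n\setminus\mathcal{A}$ with $\bbP_{x_\star}(X_s \in K, H_n > s) \geq \tfrac12 \bbP_{x_\star}(H_n > s)$ for $s$ large enough (such $K$ exists because $\bbP_{x_\star}(H_n > s) \geq \bbP_{x_\star}(H_n > s/2)\cdot \inf_{y\in K_0}\bbP_y(H_n>s/2)/\text{const}$ and one controls the probability that $X_s$ approaches $\mathcal{A}$; alternatively, use the scaling property $X_{\lambda t}^{(i)} \stackrel{d}{=} \lambda X_t^{(i)}$ of squared Bessel, which maps $H_n$ to $\lambda H_n$ and shows $\bbP_{\lambda x}(H_n > \lambda t) = \bbP_x(H_n>t)$, to rescale $X_s$ into a fixed compact set). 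On $\{X_s \in K\}$ we have $\bbP_{X_s}(H_n > t) \geq c_K\, \bbP_{x_\star}(H_n > t)$ by the first paragraph's comparison (uniformly over the compact $K$). This yields
\[
\bbP_{x_\star}(H_n > s+t) \geq \tfrac12 c_K\, \bbP_{x_\star}(H_n > s)\, \bbP_{x_\star}(H_n > t)\,,
\]
i.e.\ $\psi(s+t) \leq \psi(s) + \psi(t) + C$ for a constant $C = -\log(\tfrac12 c_K)$. Fekete's subadditive lemma applied to $\psi(t) + C$ gives that $\psi(t)/t$ converges; but we want the logarithmic rate, so instead I would apply the subadditive argument along the geometric sequence $t_k = 2^k$: setting $a_k := \psi(2^{k+1}) - \psi(2^k)$ one checks $a_{k+1}\le 2a_k + C'$-type estimates are too weak, so more cleanly, iterate the submultiplicativity to get $\bbP_{x_\star}(H_n > 2^k) \geq (\tfrac12 c_K)^{k}\,\bbP_{x_\star}(H_n > 1)^{?}$ — rather, the clean route is: the function $g(u) := \psi(e^u)$ satisfies $g(u+v) \le g(u)+g(v)+C$ only if $\psi$ were subadditive in the ordinary sense, which it is, so Fekete gives $\lim_{t\to\infty}\psi(t)/t =: \ell \in [0,\infty]$. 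One then shows separately that $\ell = 0$ (a polynomial lower bound $\bbP_{x_\star}(H_n>t)\ge t^{-C}$, e.g.\ from $\bbP(H_n>t)\ge \bbP(\text{some fixed }X^{(i)}\text{ and its partners avoid }\mathcal A)$ which decays polynomially by \eqref{eq:twoprocesses} and independence) and a polynomial upper bound $\bbP_{x_\star}(H_n>t)\le \bbP(T_{1,2}>t) \le C t^{-(1-\delta)}$ from \eqref{eq:twoprocesses}, so that $\psi(t) = \Theta(\log t)$; combined with subadditivity-up-to-$C$ of $\psi$, the ratio $\psi(t)/\log t$ converges to a finite limit $\theta_n \in [1-\delta,\infty)$ by the standard argument (apply Fekete along $t_k=2^k$ and interpolate using monotonicity of $\psi$ to fill in between consecutive powers). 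Set $\theta_n := \lim \psi(t)/\log t$; by the first paragraph it is the same for all $x\notin\mathcal A$.

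The main obstacle is the non-uniformity in the Markov decomposition: on $\{H_n > s\}$ the process $X_s$ may lie arbitrarily close to the forbidden set $\mathcal{A}$, where $\bbP_{X_s}(H_n > t)\to 0$, so the naive submultiplicative inequality fails without the restriction to a compact $K$ bounded away from $\mathcal{A}$. Justifying that one loses only a constant factor (not a polynomial factor in $t$) when restricting to such a $K$ — equivalently, that the process conditioned on $\{H_n>s\}$ does not get anomalously close to $\mathcal{A}$ — is the technical heart; the scaling property of squared Bessel processes, $\bbP_{\lambda x}(H_n>\lambda t)=\bbP_x(H_n>t)$, is the natural tool, as it lets one rescale $X_s$ to have components of order $1$ and then only the \emph{angular} part (the relative sizes, i.e.\ proximity to $\mathcal A$) needs to be controlled, which can be done with crude bounds on the individual Bessel processes.
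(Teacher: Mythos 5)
Your overall strategy — Markov property plus comparison to get a sub/supermultiplicative quantity, then Fekete, plus an argument that the limit is independent of the starting point — is the same in spirit as the paper's. However, there is a genuine gap at the crucial step where you pass from subadditivity to the logarithmic-scale limit, and it is not a technicality.

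You establish (modulo the compact-set issue, see below) the \emph{additive} inequality $\psi(s+t)\le\psi(s)+\psi(t)+C$ for $\psi(t)=-\log\bbP_{x_\star}(H_n>t)$, and Fekete then gives $\psi(t)/t\to\ell$. Since $\psi(t)=\Theta(\log t)$ this only yields the vacuous $\ell=0$, as you notice. You then assert that ``combined with subadditivity-up-to-$C$ of $\psi$, the ratio $\psi(t)/\log t$ converges \ldots\ by the standard argument,'' but there is no such argument: additive subadditivity plus $\psi=\Theta(\log t)$ do \emph{not} force $\psi(t)/\log t$ to converge. For a concrete counterexample, take $\psi(t)=\log t+f(\log t)$ with $f(u)=\tfrac{u}{2}\bigl(1+\sin(\log u)\bigr)$ for $u\ge 1$; then $f\ge 0$ is Lipschitz, so for $1\le s\le t$ one has $\psi(s+t)\le\psi(t)+\tfrac52\log 2\le\psi(s)+\psi(t)+\tfrac52\log 2$, while $\psi(t)/\log t=1+\tfrac12\bigl(1+\sin(\log\log t)\bigr)$ oscillates forever. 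Your parenthetical ``$g(u):=\psi(e^u)$ satisfies $g(u+v)\le g(u)+g(v)+C$ \ldots\ since $\psi$ is subadditive in the ordinary sense'' is precisely the error: ordinary subadditivity of $\psi$ controls $\psi(e^u+e^v)$, not $\psi(e^{u+v})=\psi(e^u e^v)$. What you actually need is \emph{multiplicative} subadditivity $\psi(st)\le\psi(s)+\psi(t)+C$, equivalently a supermultiplicative inequality of the form $\bbP(H_n>st)\ge c\,\bbP(H_n>s)\,\bbP(H_n>t)$.

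The paper gets exactly this by first performing the exponential time change $\hat X_t:=e^{-t}X_{e^t-1}$ (a CIR process). Because $\mathcal A$ is a cone, $\hat H_n=\log(1+H_n)$, so the additive Markov decomposition of the time-homogeneous process $\hat X$ translates into the multiplicative decomposition of $H_n$ on the original time scale; Fekete in $\hat t$-time is then Fekete in $\log t$. You in fact flag the scaling property $\bbP_{\lambda x}(H_n>\lambda t)=\bbP_x(H_n>t)$ as ``the natural tool,'' and this is the right instinct: applying the Markov property at time $s$ and then rescaling by $s$ (which is the same as time-changing to $\hat X$) is what turns $s+t$ into $s\cdot t$. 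Without that step, the argument does not close.

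A secondary but non-trivial point: your restriction to a compact $K\subset(\bbR_+)^n\setminus\mathcal A$ with $\bbP_{x_\star}(X_s\in K, H_n>s)\ge\tfrac12\bbP_{x_\star}(H_n>s)$ is asserted, not proved, and this is precisely the technical heart of the upper-bound direction. The paper's way out is twofold: it builds the lower (easy) bound into the subadditive quantity directly by working with $q_x(t):=\bbP_x(\hat H_n>t,\hat X_t\ge x)$, which is supermultiplicative via componentwise comparison with no need to fix a compact; and it proves the matching upper bound by showing (via the sets $\mathcal A_\delta$ and a stopping time $\tau_\delta$) that staying near $\mathcal A$ for a positive fraction of time has \emph{faster-than-any-polynomial} cost, so conditioning away from $\mathcal A$ loses only a subexponential factor in $\hat t$. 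Your sketch does not justify that the loss is only a constant (or even subpolynomial in $t$), and this is where the paper invests the most work.
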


Before we state our main result, let us give ``trivial'' bounds on the probability $\bbP_x(H>t)$, and so on $\theta_n$.
For an upper bound, we can use the independence of $(T_{2i-1,2i+1})_{1\leq i \leq \lfloor n/2\rfloor}$, together with~\eqref{eq:twoprocesses}, to obtain that $\bbP_x(H_n>t) \leq c\, t^{- \lfloor n/2\rfloor (1-\delta)}$ as $t\to\infty$.
Hence, this gives the bound
$\theta_n \geq \lfloor n/2\rfloor (1-\delta)$. 
Let us stress that if $n=3$ this gives that $\theta_3 \geq 1-\delta$, which is simply the exponent obtained when $n=2$; in particular, it is a priori not clear whether one has $\theta_3 > 1-\delta$.

For a lower bound, imposing $T_{1,2}>t$ and $T_i>t$ for $i\geq 3$, using the independence and~\eqref{eq:oneprocess}-\eqref{eq:twoprocesses}, we obtain that $\bbP_x(H_n>t) \geq c\, t^{- (1-\delta) - (n-2) (1-\frac12 \delta)}$ as $t\to\infty$.
This gives the upper bound 
$\theta_n \leq n \big(1-\frac12 \delta \big) -1$. 
In particular, when $n=3$ we get $\theta_3 \leq 2-\frac32 \delta$.

\smallskip
Our main result provides a non-trivial lower bound on $\theta_n$, valid for all $n\geq 3$.
In the case $n=3$, we also find an upper bound on $\theta_3$.
\begin{theorem}
\label{thm:main}
For all $n\geq 3$,  we have that
\[
\theta_n\geq (n-1)(1-\delta)  \,.
\]
When $n=3$ we have the following upper bound 
\[
\theta_3 \leq 2(1-\delta) + f(\delta),
\]
with $f(\delta) = \frac14 \big(\sqrt{(6-5\delta)^2 +8\delta(1-\delta)} -(6-5\delta) \big)$.
\end{theorem}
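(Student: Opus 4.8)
The plan is to treat the lower bound on $\theta_n$ and the upper bound on $\theta_3$ by completely different mechanisms. For the lower bound $\theta_n \geq (n-1)(1-\delta)$, the idea is that on the event $\{H_n>t\}$ one can essentially force $n-1$ of the coordinates to avoid having joint returns to zero with some fixed reference coordinate, and each such constraint costs a factor $t^{-(1-\delta)}$ by~\eqref{eq:twoprocesses}. Concretely, I would first establish (or invoke, from a preliminary section) a \emph{subadditivity / monotonicity} principle: $\{H_n > t\}$ is contained in the event that there is no index $i\in\{2,\dots,n\}$ with $T_{1,i}\le t$, and more importantly, one should compare $\bbP_x(H_n>t)$ from \emph{below} — wait, for a \emph{lower} bound on $\theta_n$ we need an \emph{upper} bound on $\bbP_x(H_n>t)$. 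So the right move is: by a union bound / FKG-type inequality, restrict attention to the $n-1$ pairs $(1,2),(1,3),\dots,(1,n)$; on $\{H_n>t\}$ all of $T_{1,2},\dots,T_{1,n}$ exceed $t$. These events are \emph{not} independent (they share the coordinate $X^{(1)}$), so a naive product bound fails. The trick I would use is conditioning on the path of $X^{(1)}$: given $X^{(1)}=\omega$, the events $\{T_{1,i}>t\}$ for $i=2,\dots,n$ become conditionally independent, and each equals the event that $X^{(i)}$ avoids the (time-dependent) zero set $\{s\le t: \omega_s=0\}$ of $\omega$. One then needs the bound $\bbP(X^{(i)} \text{ avoids the zeros of }\omega \text{ on }[0,t]\mid X^{(1)}=\omega) \le C t^{-(1-\delta)}$ uniformly — or at least after integrating against the law of $X^{(1)}$ — which should follow from~\eqref{eq:twoprocesses} applied to $X^{(1)}+X^{(i)}$ once one checks the averaging is harmless. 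Raising to the power $n-1$ gives $\bbP_x(H_n>t)\le C t^{-(n-1)(1-\delta)+o(1)}$, hence $\theta_n \ge (n-1)(1-\delta)$.

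For the upper bound $\theta_3 \le 2(1-\delta)+f(\delta)$, we need a \emph{lower} bound on $\bbP_x(H_3>t)$, i.e.\ an explicit strategy for the three processes to avoid pairwise collisions that is not too costly. The "trivial" strategy (keep $X^{(1)}+X^{(2)}$ away from $0$ and keep $X^{(3)}$ away from $0$) gives only $\theta_3 \le 2-\tfrac32\delta$, which is worse than $2(1-\delta)+f(\delta)$ for small $\delta$. The improvement must come from a smarter trade-off: instead of forbidding $X^{(3)}$ from ever returning to $0$, allow it to return to $0$ but only at times when \emph{both} $X^{(1)}$ and $X^{(2)}$ are bounded away from $0$. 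I would set this up via an entropy/large-deviations computation: look for the optimal "profile" describing how much time each process spends near $0$, or equivalently optimize over a one-parameter family of strategies. The function $f(\delta)=\tfrac14\big(\sqrt{(6-5\delta)^2+8\delta(1-\delta)}-(6-5\delta)\big)$ is the positive root of a quadratic, which strongly suggests that the exponent in the constructed strategy is obtained by minimizing a quadratic in an auxiliary parameter $\lambda$ (say, a rate at which one relaxes a constraint), with the two terms under the square root being the "diagonal" and "cross" contributions. So the concrete plan is: (i) fix a scale $s\in(0,t)$ and split $[0,t]$ into two regimes; (ii) on $[0,s]$ use the pair-$(1,2)$-plus-singleton-$3$ strategy, on $[s,t]$ swap roles, or more likely run a scaling argument where $X^{(3)}$ is conditioned to stay in $[\epsilon t^{1/2\text{-ish}}, \cdot]$; (iii) use the known single-Bessel and two-Bessel persistence asymptotics~\eqref{eq:oneprocess}--\eqref{eq:twoprocesses} together with the Markov property at time $s$ to get a product lower bound; (iv) optimize the resulting exponent over $s$ (equivalently over the ratio $\log s/\log t$). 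Carrying out the optimization yields the quadratic whose root is $f(\delta)$.

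The main obstacle, I expect, is step (iii)–(iv) of the upper-bound argument: controlling the cost of the combined strategy \emph{uniformly} enough to extract a clean exponent. The difficulty is that forcing $X^{(3)}$ to return to $0$ only during "safe" windows (where $X^{(1)},X^{(2)}>0$) requires a quantitative lower bound on the probability that a Bessel process has its zero set contained in a prescribed union of intervals, with good dependence on the lengths and positions of those intervals — this is more delicate than the off-the-shelf persistence estimates. One likely needs precise two-sided estimates for $\bbP(s\in\tau)$-type quantities (the local-time / Green's function asymptotics for Bessel processes, cf.\ the renewal-theoretic estimates $\bbP(s\in\tau^{(i)})$ mentioned in the introduction) and then a careful second-moment or chaining argument to patch the windows together. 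A secondary subtlety is making the $o(1)$ in Proposition~\ref{prop-existence-theta} genuinely uniform so that the exponent passes to the limit; this should be routine given the monotonicity/sub-multiplicativity used to prove existence of $\theta_n$, but it needs to be invoked carefully at each application of the Markov property. For the lower bound $\theta_n\ge(n-1)(1-\delta)$, the only real point requiring care is justifying the conditional-independence-after-conditioning-on-$X^{(1)}$ step and the uniform-in-$\omega$ control; everything else is a union bound and the two-Bessel asymptotic.
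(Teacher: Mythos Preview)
Your lower-bound argument has a genuine gap. You correctly observe that $\{H_n>t\}\subset\bigcap_{i=2}^n\{T_{1,i}>t\}$ and that, after conditioning on $X^{(1)}=\omega$, the events $\{T_{1,i}>t\}$ become independent with common conditional probability $g(\omega)$. But the resulting bound $\bbP_x(H_n>t)\le\bbE\big[g(X^{(1)})^{n-1}\big]$ is \emph{not} of order $t^{-(n-1)(1-\delta)}$: on the event $\{T_1>t\}$ one has $g(X^{(1)})=1$, so $\bbE[g^{n-1}]\ge\bbP_x(T_1>t)\sim c\,t^{-(1-\delta/2)}$, which is far larger than $t^{-(n-1)(1-\delta)}$ (already for $n=3$ and $\delta<2/3$). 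The quantity $\bbE[g^{n-1}]=\bbP_x(T_{1,2}>t,\dots,T_{1,n}>t)$ is precisely $\bbP_x(\check H_n^{(1)}>t)$ in the paper's notation; the paper conjectures its exponent stays near $1-\tfrac12\delta$ (Guess~\ref{guess:ninfinity}) and, for $n=3$, is strictly below $2(1-\delta)$ (Guess~\ref{guess:2}). The star-graph inclusion discards the constraints among $X^{(2)},\dots,X^{(n)}$ and simply cannot reach $(n-1)(1-\delta)$.

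The paper's approach to \emph{both} inequalities is completely different and avoids this correlation issue. One builds a one-dimensional functional $Z_t=\gp(X_t)$ whose first zero is $H_n$, derives via It\^o an evolution $\dd Z_t=2\sqrt{Z_tV_t}\,\dd W_t+D_tV_t\,\dd t$, and then compares $Z$ with a time-changed squared Bessel using the comparison theorem (Proposition~\ref{prop:compar_theorem}). For $\theta_n\ge(n-1)(1-\delta)$ one takes $\gp=\Pi_{n-1}$, the $(n{-}1)$st elementary symmetric polynomial; a direct computation gives $D_t\le 2\delta$ and $V_t\ge\Pi_{n-2}(X_t)$, so $Z$ lies below a time-changed Bessel of dimension $2\delta$ with clock $\rho_t\gtrsim t^{n-1}$. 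For $\theta_3\le 2(1-\delta)+f(\delta)$ one takes $\gp(x)=S^a\big(A-\tfrac{P}{S}\big)$ with $S=\sum x_i$, $A=\sum_{i<j}x_ix_j$, $P=x_1x_2x_3$; the parameter $a$ is chosen as the positive root of $2(1-\delta)a^2+(6-5\delta)a-\delta=0$, which forces $D_t\ge 2\delta$, and then $V_t\le C\,S_t^{a+1}$ gives $\rho_t\lesssim t^{2+a}$, hence $\theta_3\le(2+a)(1-\delta)=2(1-\delta)+f(\delta)$.

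So your intuition that $f(\delta)$ arises from optimizing a quadratic is correct, but the variable being optimized is a homogeneity exponent in a Lyapunov-type functional, not a time-splitting parameter; and the engine behind both bounds is the one-dimensional comparison theorem for squared-Bessel-like SDEs, not conditioning, FKG, or explicit survival strategies.
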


%

\begin{figure}[ht]
\begin{center}
\includegraphics[scale=0.44]{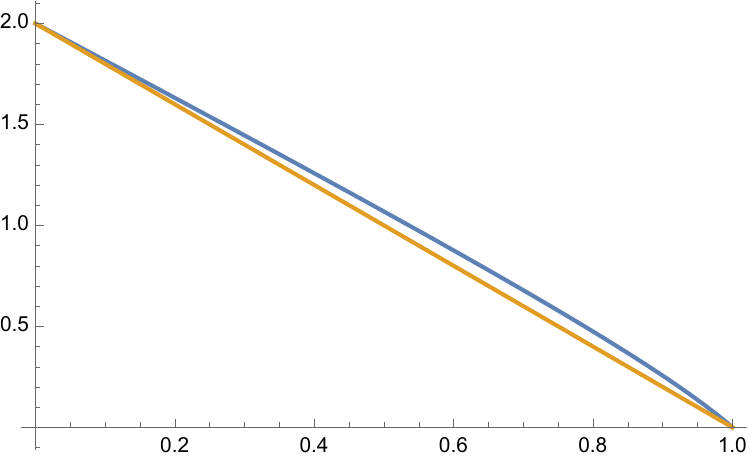}\qquad\qquad
\includegraphics[scale=0.44]{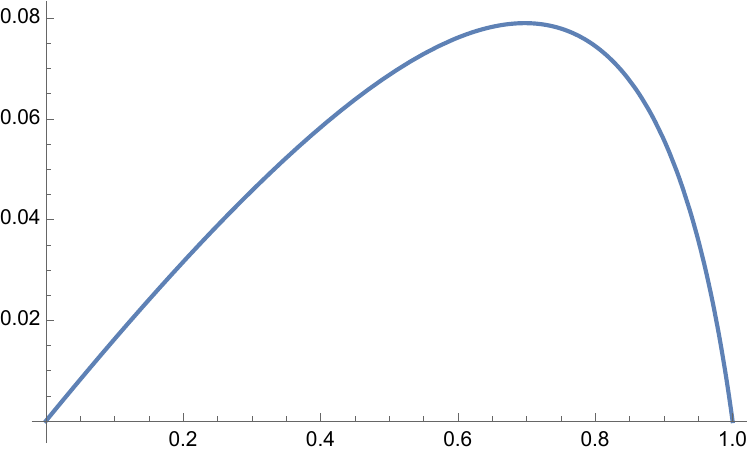}
\end{center}
\caption{\small Illustration of the bounds of Theorem~\ref{thm:main} in the case $n=3$.
The persistence exponent $\theta_3$ verifies $\theta_-:=2(1-\delta) \leq \theta_3 \leq 2(1-\delta)  + f(\delta)=:\theta_+$. The plot on the left-hand side shows the graphs of $\theta_-,\theta_+$ as functions of $\delta\in (0,1)$; the plot on the right-hand side shows the graph of $f$, and numerically one has $\sup_{[0,1]}f(\delta) \approx 0.079$.}
\label{fig:thetas}
\end{figure}

\subsection{First comments and some guesses}

We now make a few comments on our result and we develop some interesting open questions one could pursue.

\paragraph{About $\theta_3$.}
In view of the fact that the function $f(\delta)$ is small (see Figure~\ref{fig:thetas}) and the fact that our upper bound could possibly be improved (see Remark~\ref{rem:upper-better}) one may have the following guess.

\begin{guess}
\label{guess:main}
For $n=3$ and $\delta \in (0,1)$, we have $\theta_3 = 2 (1-\delta)$.
\end{guess}

\noindent
We would not venture to call it a conjecture since we have no simple heuristic as to why this should be the correct answer; in fact we expect that this guess should \emph{not} be correct when $n$ is large, see Guess~\ref{guess:ninfinity} below.

\paragraph{About subsets of joint returns to zero.}
Naturally, there are many other questions one could ask about joint returns to zero of Bessel processes. 
For instance we could consider a subset $K \subset  \cP_2(n) := \{ \{i,j\},  1\leq i <j\leq n \}$ of all possible pairs of indices, and consider $T_{K} := \min\{ T_{i,j} \,, \ \{i,j\}\in K\}$, \textit{i.e.}\ the first joint return for any $X^{(i)}$ and $X^{(j)}$ with $\{i,j\}\in K$.
We focus in this article on the case $K=\cP_2(n)$, and in fact we have no clear guess for a general subset~$K$, even in simple cases such as $n=3$, $K= \{ \{1,2\} ,\{1,3\}\}$.
However, the following guess seems reasonable, but we are not able to prove it.
\begin{guess}
\label{guess:2}
For any $\delta \in(0,1)$, we have $\bbP(T_{1,2}>t, T_{1,3}>t) = t^{-\tilde \theta_3 +o(1)}$ as $t\to\infty$, with $ \theta_2 = 1-\delta < \tilde \theta_3 < 2(1-\delta) \leq \theta_3$.
\end{guess}

This guess somehow tells that it is \textit{strictly harder} to avoid collisions when one considers more pairs of particles, but we are not able to prove any of the bounds $ 1-\delta < \hat \theta_3 < 2(1-\delta)$. 
In fact, our Theorem~\ref{thm:main} shows that it is strictly harder to avoid \emph{any} collision when you have three particles, which is already an achievement.


\paragraph{About $\theta_n$ when $n$ is large.}
Another aspect of the problem one may consider is when the number $n$ of particles is very large. We then have the following guess (for which we give some convincing argument below), which tells in particular that the lower bound $\theta_n \geq (n-1)(1-\delta)$ is \emph{not} sharp, at least when $n$ is large\footnote{Numerical simulations appear to confirm that $\theta_n > (n-1) (1-\delta)$ when $n\geq 5$, at least in some range of $\delta$.}. 

\begin{guess}
\label{guess:ninfinity}
For any fixed $\delta \in (0,1)$, we have that $\theta_n \sim n (1-\frac12\delta)$ as $n\to\infty$.
\end{guess}

Let us briefly explain why we conjecture this specific asymptotic behavior for $\theta_n$.
First of all, we showed a trivial upper bound $\theta_n \leq n (1-\frac12 \delta) -1$ in Section~\ref{sec:mainresult}, which matches this asymptotics.
For the lower bound, the heuristic goes as follows.

First, let us set $\check H_n^{(1)} := \inf\{t, \exists\,  2\leq i \leq n\,,\,  X_t^{(1)}=X_t^{(i)}=0\}$ the first instant of collision of particle number~$1$ with \emph{any} other particle.
Then, we strongly believe (but are not able to prove) that when $n$ is large, one has
\[
\bbP_x(\check H_n^{(1)} > t) = t^{- \check \theta_n +o(1)} \qquad \text{ with }\ \ \check \theta_n \sim 1-\frac12 \delta \,.
\]
Indeed, the easiest way for the particle number~$1$ to avoid a collision with the other $n-1$ particles is to avoid touching~$0$ whatsoever (\textit{i.e.}\ having $T_1>t$), hence the exponent should be close to $1-\frac12 \delta$, which comes from~\eqref{eq:oneprocess}; indeed, requiring all $n-1$ other particles doing something unusual should be much more costly.
With this in mind, we should have that
\[
\bbP_x(H_n>t) = \bbP_x(\check H_n^{(1)} > t) \, \bbP_x( H_n>t \mid \check H_n^{(1)} > t) \approx t^{- \check \theta_n +o(1)} \bbP_x( H_{n-1}>t )\,,
\]
where $H_{n-1}$ denotes the first collision time among $n-1$ particles.
The reasoning here is that the conditioning by the event $\{\check H_n^{(1)} > t\}$ mostly affects the first particle but almost not the others: in practice, we should have $\bbP_x( H_n>t \mid \check H_n^{(1)} > t) \approx \bbP_x( H_n>t \mid T_1 > t) = \bbP_x( H_{n-1}>t)$.
Iterating this argument (as long as the number of particles remains large) supports the guess that $\theta_n = n (1-\frac12\delta) +o(n)$ as $n\to\infty$.

\subsection{Organisation of the rest of the paper}

Let us briefly outline the rest of the paper.
\begin{itemize}[wide]
\item In Section~\ref{sec:comments}, we comment on some related questions:
we present remarkable properties of the case of  $n=2$ Bessel processes (these properties fail for $n\ge 3$);
we give results in the case of a negative dimension $\delta<0$, which are trivial;
we comment on the relation of our question with various PDE problems, which provide a different perspective (that we were not able to exploit).

\item In Section~\ref{sec:prelim}, we present some preliminary results: a comparison theorem that allows us to compare different diffusion processes; a proof of the existence of the persistence exponent $\theta_n$ via an elementary (and general) method (it relies on the sub-additive lemma, with some small additional technical difficulty).
We also present the general strategy of the proof in Section~\ref{sec:strategy}: in a nutshell, the idea is to find an auxiliary process $(Z_t)_{t\geq 0}$ for which $H_n$ is the hitting time of $0$, and to compare $(Z_t)_{t\geq 0}$ with a time-changed Bessel process (for which we know how to control the hitting time of $0$).

\item In Section~\ref{sec:proof}, we implement the strategy outlined in Section~\ref{sec:strategy}.
We introduce two auxiliary processes (a different ones for the lower and the upper bound on $\theta_n$) and compare them with time-changed Bessel processes. 
The time-changes are controlled in a separate Section~\ref{sec:time-changed} (our goal is to give a self-contained and robust proof, and in particular we do not rely on subtle properties of Bessel processes).

\item In Appendix~\ref{app:details_calculs}  and~\ref{app:spectralpb}, we collect some tedious calculations that we had postponed not to break the flow of the proof.
\end{itemize}

\section{Various comments}
\label{sec:comments}

\subsection{About two Bessel processes conditioned on having no joint return to zero}
\label{sec:twoBessels}

Let us now develop a bit on the case of $n=2$ squared Bessel processes, which contains some interesting features and helps understand why the case $n\geq 3$ is more complicated.

A natural approach to attacking the case $n=3$ and a natural question in itself is to consider two Bessel processes \textit{conditioned} on having no collision before time $t$. Indeed one can write 
\begin{equation}
\label{eq:conditioning}
\bbP_x(H_3>t) = \bbP_x(T_{1,2} >t) \bbP_x(H>t \mid T_{1,2}>t) \sim c_x t^{- (1-\delta)} \bbP_x( T_{3,1}, T_{3,2}>t \mid T_{1,2}>t) \,,
\end{equation}
and understanding the behavior of $X_{t}^{(1)}, X_t^{(2)}$ conditioned on $T_{1,2}>t$ seems to be a good start to study $\bbP_x(H_3>t)$.

Interestingly, the behavior of $X_{t}^{(1)}, X_t^{(2)}$ conditioned on having no collision, \textit{i.e.}\ $T_{1,2}=+\infty$, is remarkably clear. 
Indeed, let $S_t:= X_t^{(1)} +X_t^{(2)}$ and $U_t:= X_t^{(1)}/S_t \in [0,1]$, $1-U_t= X_t^{(2)}/S_t$, so that $X_t^{(1)} = S_t U_t$ and $X_t^{(2)} = S_t (1-U_t)$.
Then, a simple application of It\^o's formula gives, after straightforward calculations, that $(S_t)_{t\geq 0}$ and $(U_t)_{t\geq 0}$ satisfy the following SDEs:
\begin{equation}
\label{eq:SU}
\begin{split}
\dd S_t & = 2\sqrt{S_t} \,\dd W_t + 2\delta \dd t \\
\dd U_t & = \frac{2}{\sqrt{S_t}} \sqrt{U_t(1-U_t)} \,\dd \tilde W_t  +  \frac{\delta}{S_t} (1-2U_t)\dd t 
\end{split}
\end{equation}
with $(W_t)_{t\geq 0}$, $(\tilde W_t)_{t\geq 0}$ two \textit{independent} Brownian motions.
In particular, $(S_t)_{t\geq 0}$ is a squared Bessel process of dimension $2\delta$ and $U_t$ can be written as time-changed (by $\int_0^t S_u^{-1} \dd u$) diffusion, independent of $(S_t)_{t\geq0}$.

Hence, conditioning on $T_{1,2} = +\infty$ (\textit{i.e.}\ on $S_t>0$ for all $t>0$) simply has the effect of changing $(S_t)_{t\geq 0}$ to a squared Bessel process of dimension $4-2\delta$, see \cite{GY03}.
We therefore end up with the following result.

\begin{proposition}
\label{prop:conditioned}
Conditionally on $T_{1,2} = +\infty$, the process $(X_t^{(1)},X_t^{(2)})_{t\geq 0}$ have the distribution of $( \tilde S_t \tilde U_{\tilde \tau_t}, \tilde S_t (1-U_{\tilde \tau_t}))_{t\geq 0}$, where  $\tilde S, \tilde U$ are independent diffusion characterized by the following:
\begin{enumerate}
\item $(\tilde S_t)_{t\geq 0}$ is a squared Bessel process of dimension $4-2\delta$, \textit{i.e.}\ follows the evolution equation 
\[
\dd \tilde S_t = 2\sqrt{\tilde S_t}\, \dd W_t  +  (4-2\delta)\dd t \,;
\]

\item $(\tilde U_t)_{t\geq 0}$ follows the evolution equation 
\[
\dd \tilde U_t = 2\sqrt{\tilde U_t(1-\tilde U_t)} \,\dd \tilde W_t  +  \delta (1-2\tilde U_t)\dd t \,;
\]
\end{enumerate}
and $\tilde \tau_t$ is the inverse of $t\mapsto\int_0^t \tilde S_u^{-1} \dd u$.
\end{proposition}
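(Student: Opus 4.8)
The plan is to exploit the skew-product structure already visible in~\eqref{eq:SU}, together with the observation that the event $\{T_{1,2}=+\infty\}$ depends only on the radial part $S_t=X_t^{(1)}+X_t^{(2)}$. First I would make the time change precise. Set $A_t:=\int_0^t S_u^{-1}\,\dd u$, a continuous strictly increasing additive functional of~$S$; it is classical (and can be seen via Williams' time reversal of~$S$ at~$T_{1,2}$, after which the path looks locally like a squared Bessel process of dimension $4-2\delta>2$ started from~$0$, for which $u\mapsto u^{-1}$ is not integrable at~$0$) that $A_t\to+\infty$ as $t\uparrow T_{1,2}$, so that $\tau:=A^{-1}$ is a bijection of $[0,\infty)$. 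Putting $\tilde U_s:=U_{\tau_s}$ and applying the time-change formula for stochastic differential equations to the second line of~\eqref{eq:SU}, the common factor $S_t^{-1}$ in the diffusion and drift coefficients is absorbed by the change of clock, so that $\tilde U$ solves the autonomous equation
\[
\dd \tilde U_s = 2\sqrt{\tilde U_s(1-\tilde U_s)}\,\dd \beta_s + \delta(1-2\tilde U_s)\,\dd s \,,
\]
where $\beta_s:=\int_0^{\tau_s} S_u^{-1/2}\,\dd \tilde W_u$ has quadratic variation $A_{\tau_s}=s$ and hence is a Brownian motion. Undoing the time change gives $U_t=\tilde U_{A_t}$, i.e.\ $X_t^{(1)}=S_t\,\tilde U_{A_t}$ and $X_t^{(2)}=S_t\,(1-\tilde U_{A_t})$ for $t<T_{1,2}$.

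Next I would show that $\tilde U$ is independent of~$S$. Since $S$ is a measurable functional of the Brownian motion~$W$ alone, whereas $\tilde W$ is independent of~$W$: conditionally on $\sigma(W)$ — which contains $\sigma(S)$ and $\sigma(\tau)$ — the integrand $S^{-1/2}$ is deterministic, so $\beta$ is a Wiener integral time-changed by its own bracket, hence a Brownian motion whose conditional law is Wiener measure regardless of the conditioning; therefore $\beta$, and with it $\tilde U$ (the pathwise-unique strong solution of the displayed equation driven by~$\beta$), is independent of~$S$. Combined with the first paragraph, this exhibits $(X^{(1)},X^{(2)})$, started from any $x\notin\mathcal A$, as an explicit measurable functional of the \emph{independent} pair $(S,\tilde U)$, where $S$ is a squared Bessel process of dimension~$2\delta$ and $\tilde U$ is the diffusion of point~(2) of the statement.

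Finally I would condition. The event $\{T_{1,2}=+\infty\}=\{S_t>0\text{ for all }t>0\}$ is $\sigma(S)$-measurable; one checks that $h(x_1,x_2):=(x_1+x_2)^{1-\delta}$ is $\mathcal L^2_\delta$-harmonic, and that $\bbP_x(\,\cdot\mid T_{1,2}>u)$ converges as $u\to\infty$ to the Doob $h$-transform of~$\bbP_x$ (the classical conditioning of a diffusion to avoid a set, associated with a harmonic function whose tail $\bbP_x(T_{1,2}>u)$ is regularly varying by~\eqref{eq:twoprocesses}). Since~$h$ depends on the path only through~$S$, this conditioning leaves the independent factor~$\tilde U$ unchanged and modifies the law of~$S$ only; and by the classical identification of a squared Bessel process conditioned to avoid~$0$ (see~\cite{GY03}), the conditional law of~$S$ is that of a squared Bessel process~$\tilde S$ of dimension $4-2\delta$. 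As $4-2\delta>2$, $\tilde S$ stays positive and $\int_0^\infty \tilde S_u^{-1}\,\dd u=+\infty$, so the clock $A^{\tilde S}_t:=\int_0^t \tilde S_u^{-1}\,\dd u$ is a bijection of $[0,\infty)$; substituting $(\tilde S,\tilde U)$ for $(S,\tilde U)$ in the functional identity of the first two paragraphs then gives, conditionally on $T_{1,2}=+\infty$, that $(X_t^{(1)},X_t^{(2)})_{t\ge 0}$ has the distribution of $\big(\tilde S_t\,\tilde U_{A^{\tilde S}_t},\ \tilde S_t\,(1-\tilde U_{A^{\tilde S}_t})\big)_{t\ge 0}$ with $\tilde S\perp\tilde U$, which is the assertion (the clock $A^{\tilde S}$ playing the role of the time change~$\tilde\tau$).

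As for the difficulties: the It\^o computation behind~\eqref{eq:SU} and the time-change of SDEs are routine, and once $\tilde U\perp S$ is known the representation transfers automatically to the conditioned process, since we condition on a $\sigma(S)$-event. The delicate points are all in the third paragraph: giving a rigorous meaning to the singular conditioning on $\{T_{1,2}=+\infty\}$ (harmonicity of $(x_1+x_2)^{1-\delta}$, convergence of $\bbP_x(\,\cdot\mid T_{1,2}>u)$ to the associated $h$-transform) and, above all, the identification of the $h$-transformed radial marginal as a squared Bessel process of dimension $4-2\delta$, which is precisely the content of~\cite{GY03}; a minor but genuine technical point is the fact $A_{T_{1,2}}=+\infty$ used to make the unconditioned time change a bijection.
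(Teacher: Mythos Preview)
Your argument is correct and follows exactly the approach the paper itself sketches in the paragraph preceding the proposition: derive the skew-product~\eqref{eq:SU}, time-change $U$ to obtain an autonomous diffusion $\tilde U$ independent of~$S$, and then observe that conditioning on $\{T_{1,2}=+\infty\}$ is a $\sigma(S)$-event which, by~\cite{GY03}, turns $S$ into a squared Bessel of dimension $4-2\delta$ while leaving $\tilde U$ untouched. The paper's treatment is a two-line sketch; your version supplies the details it omits (bijectivity of the clock, the independence argument for $\tilde U$ via the conditional law of~$\beta$, and the $h$-transform interpretation of the singular conditioning), but there is no difference in strategy.
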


\begin{remark}
We could also define the angle $\Theta_t$, such that $\tilde U_t := \cos^2(\Theta_t)$, $1- \tilde U_t = \sin^2(\Theta_t)$. Applying It\^o's formula, after some calculation one ends up with the following SDE for $(\Theta_t)_{t\geq 0}$:
\[
\dd \Theta_t = \dd \tilde W_t + \frac{\delta-1}{2 \tan(2 \Theta_t)} \, \dd t \,.
\]
Note that it looks like the evolution equation of a Bessel process of dimension $\delta$ when $\Theta$ approaches~$0$ (and similarly for $\frac{\pi}{2} -\Theta$, by symmetry), with a null drift when $\Theta = \frac{\pi}{4}$.
\end{remark}

\noindent
Let us make some further comments and give one result.

\smallskip
\noindent
\textbf{Comment 1.} The conditioning by $\{T_{1,2}=+\infty\}$ significantly changes the behavior of the tail of the first hitting of zero, $\min\{T_1,T_2\}$.
In fact, somewhat surpisingly, the persistence exponent of $\bbP_x( \min\{T_{1},T_2\}>t \mid T_{1,2} =+\infty)$ is equal to~$1$, and in particular it does not depend on $\delta\in(0,1)$.
Indeed,  as $t\to\infty$, we have that,
\[
\bbP_x\big( \min\{T_{1},T_2\}>t \mid T_{1,2} >t \big) = \frac{\bbP_x\big( \min\{T_{1},T_2\}>t  \big)}{\bbP_x\big( T_{1,2} >t \big)} \sim \frac{c_{x_1} c_{x_2} t^{-2(1-\delta/2)}}{ c_{x_1+x_2} t^{-(1-\delta)}} = \frac{c_{x_1} c_{x_2} }{ c_{x_1+x_2}} \; t^{-1} \,,
\]
where we have used~\eqref{eq:oneprocess}-\eqref{eq:twoprocesses};
we leave aside the technicality of replacing the conditioning by $T_{1,2}=+\infty$.
This shows in particular that the conditioning makes it strictly easier for the Bessel processes to avoid hitting zero at all, changing the persitence exponent of $\min\{T_{1},T_2\}$ from~$2-\delta$ to~$1$.


\smallskip
\noindent
{\bf Comment 2.}
The zero set $\cZ^{(i)}:=\{t, X_t^{(i)}=0\}$ of a squared Bessel is a regenerative set, in fact an $\alpha$-stable regenerative set with $\alpha:=1-\frac12 \delta$; see~\cite[Ch.~2]{bertoin1999} for an introduction to regenerative sets.
Then, the set of collision times $\{t, X_t^{(i)} =X_t^{(j)} =0\}$ is $\cZ^{(i)}\cap \cZ^{(j)}$, which is itself a regenerative set.
This regenerative structure is not specific to Bessel processes and holds for any Markov process, and can be useful in estimating the probability $\bbP(T_{i,j}>t) = \bbP(\cZ^{(i)}\cap \cZ^{(j)} \cap[0,t] =\emptyset)$, similarly to the discrete setting (see Section~\ref{sec:motiv} above).
On the other hand, the regenerative structure completely disappears when considering $n\geq 3$ processes, since the set of collision times is then $\bigcup_{i\neq j} \cZ^{(i)}\cap \cZ^{(j)} $ which is not a regenerative set anymore\footnote{Note however that the regenerative structure is present if one considers ``$n$-collisions'', \textit{i.e.}\ simultaneous return to $0$ of the $n$ processes all together.}.

\smallskip
\noindent
{\bf Comment 3.} Proposition~\ref{prop:conditioned} allows us to ``understand'' the law of $\cZ^{(1)}\cup \cZ^{(2)}$ conditioned on $\cZ^{(1)}\cap \cZ^{(2)}=\emptyset$: it is the zero set of the process $R_t := \tilde Y_t \tilde U_{\tilde \tau_t} (1-\tilde U_{\tilde \tau_t})$, for which one has the evolution equation
\[
\dd R_t  =  2\sqrt{R_t (1-3 \widehat{U}_t)} \dd \hat W_t + \delta (1- 6 \widehat{U}_t) \dd t \,,
\]
where $\widehat{U}_t := \tilde U_{\tilde \tau_t}(1-\tilde U_{\tilde \tau_t}) \in [0,\frac14]$.
Note that the process $R_t$ can be interpreted as a time-changed (by $\int_0^t (1-3 \widehat{U}_s) \dd s$) squared Bessel process, with varying dimension $\delta \frac{1- 6 \hat U_t}{1-3 \widehat{U}_t}$ --- the difficulty here is that the variation of the dimension is intricate.

Then, one could hope to understand $\bbP( T_{3,1},T_{3,2} >t \mid T_{1,2} =+\infty)$,  since $\min\{T_{3,1},T_{3,2}\}$ is the hitting time of zero of the process $R_t+X_t^{(3)}$.
In fact, with techniques similar to the ones developed in this paper, we should be able to show that $\bbP_x( T_{3,1},T_{3,2} >t \mid T_{1,2} =+\infty) \leq  t^{-(1-\delta)+o(1)}$ (which in view of~\eqref{eq:conditioning} would correspond to the bound $\theta_3\geq 2(1-\delta)$), but we are not able to obtain matching upper and lower bounds with this approach.

\subsection{The case of a negative dimension}
\label{sec:delta-neg}

Let us comment briefly on the case where the dimension of the squared Bessel processes is negative, \textit{i.e.}\ $\delta\leq 0$.
In that case, the processes $X^{(i)}$ are absorbed at $0$, meaning that $X_t^{(i)} =0$ for all $t\geq T_i$.
Therefore, we get that $T_{i,j} := \max\{T_i,T_j\}$ so that 
\begin{equation}
\label{eq:two-delta-neg}
\bbP_x(T_{i,j} >t) = \bbP_x(T_i >t \text{ or } T_j>t) \sim (c_{x_i}+c_{x_j}) t^{- (1-\frac12 \delta)} \quad \text{ as } t\to\infty \,, 
\end{equation}
using also~\eqref{eq:oneprocess}.
Similarly, we have that $H_n = \min_{1\leq i <j\leq n}\{ T_{i,j}\}$ is the second smallest $T_i$, so we have that
\[
\bbP_x(H_n>t) = \bbP_x\Big( \bigcup_{j=1}^n  \{T_i >t \ \text{ for all } i\neq j  \} \Big) \,.
\]
Using the inclusion-exclusion principle and again~\eqref{eq:oneprocess}, we easily end up with the following result.

\begin{proposition}
\label{prop:delta-neg}
Let $n\geq 2$ and $\delta\leq 0$. Then we have  as $t\to\infty$
\[
\bbP_x(H_n>t) \sim \sum_{j=1}^n \bbP_x\big( T_i >t \ \text{ for all } i\neq j  \big) \sim c_n\, t^{ - \theta_n} \,,
\]
with $\theta_n = (n-1) (1-\delta/2)$ and
with the constant $c_n:= \sum\limits_{j=1}^n \prod\limits_{i\neq j} c_{x_i}$.
\end{proposition}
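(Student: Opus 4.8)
The starting observation is purely structural. When $\delta\le 0$ each squared Bessel process $X^{(i)}$ is absorbed at $0$, i.e.\ $X_t^{(i)}=0$ for all $t\ge T_i$; since $X^{(i)}_t+X^{(j)}_t=0$ exactly when both coordinates vanish, this forces $T_{i,j}=\max\{T_i,T_j\}$. Consequently, if $T_{(1)}\le T_{(2)}\le\cdots\le T_{(n)}$ denote the order statistics of $(T_i)_{1\le i\le n}$, then $H_n=\min_{i<j}\max\{T_i,T_j\}=T_{(2)}$, so that
\[
\{H_n>t\}=\{T_{(2)}>t\}=\bigcup_{j=1}^n A_j\,,\qquad A_j:=\{T_i>t\ \text{ for all } i\ne j\}\,.
\]
This is the key identity; everything else is bookkeeping.

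Next I would apply inclusion--exclusion to $\bbP_x(\bigcup_j A_j)$. The point to notice is that for any set of indices $S$ with $|S|\ge 2$ one has $\bigcap_{j\in S}A_j=\{T_i>t\ \text{for all }i\}=:B$, because $\bigcup_{j\in S}(\{1,\dots,n\}\setminus\{j\})=\{1,\dots,n\}$ as soon as $|S|\ge 2$. Hence all higher-order intersections coincide, and collecting the binomial coefficients, $\sum_{m=2}^n(-1)^{m+1}\binom{n}{m}=-(n-1)$, one gets the exact identity
\[
\bbP_x(H_n>t)=\sum_{j=1}^n\bbP_x(A_j)-(n-1)\,\bbP_x(B)\,.
\]
By independence, $\bbP_x(A_j)=\prod_{i\ne j}\bbP_{x}(T_i>t)$ and $\bbP_x(B)=\prod_{i=1}^n\bbP_x(T_i>t)$; applying~\eqref{eq:oneprocess} (valid since $\delta\le 0<2$, and with $c_{x_i}>0$ as $1-\tfrac\delta2\ge 1$) gives $\bbP_x(A_j)\sim\big(\prod_{i\ne j}c_{x_i}\big)t^{-(n-1)(1-\delta/2)}$ and $\bbP_x(B)\sim\big(\prod_{i}c_{x_i}\big)t^{-n(1-\delta/2)}$. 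Since $1-\tfrac\delta2>0$, the $\bbP_x(B)$ term is $o\big(t^{-(n-1)(1-\delta/2)}\big)$, and summing the finitely many asymptotically equivalent leading terms yields $\bbP_x(H_n>t)\sim c_n\,t^{-(n-1)(1-\delta/2)}$ with $c_n=\sum_{j=1}^n\prod_{i\ne j}c_{x_i}$, as claimed.

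There is no real obstacle here: the argument is elementary once one recognizes $H_n$ as the second order statistic of the absorption times. The only mildly delicate points are the inclusion--exclusion coefficient computation and the verification that the correction term $\bbP_x(B)$ is of strictly lower order. I would also add one line to cover starting points $x\in(\bbR_+)^n\setminus\mathcal A$ lying on the boundary: at most one coordinate $x_{j_0}$ can vanish, in which case $T_{j_0}=0$ a.s.\ and $c_{x_{j_0}}=0$, so only the term $A_{j_0}$ contributes and the formula for $c_n$ degenerates consistently to $\prod_{i\ne j_0}c_{x_i}$.
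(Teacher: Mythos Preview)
Your proposal is correct and follows the same approach as the paper: both recognize that absorption at zero forces $T_{i,j}=\max\{T_i,T_j\}$, identify $H_n$ with the second order statistic of the $T_i$'s, and conclude via inclusion--exclusion combined with~\eqref{eq:oneprocess}. The paper merely states ``using the inclusion--exclusion principle and again~\eqref{eq:oneprocess}, we easily end up with the following result,'' whereas you carry out the computation explicitly (including the observation that all higher-order intersections coincide with $B$ and the binomial-coefficient identity), and you add a useful remark on boundary starting points; but the underlying argument is identical.
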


\noindent
Let us observe that in Proposition~\ref{prop:delta-neg}, the persistence exponent verifies $\theta_n=(n-1) \theta_2$ (similalry as in Guess~\ref{guess:main} with $n=3$) and also $\theta_n \sim n (1-\frac12 \delta)$ as $n\to\infty$ (similarly as in Guess~\ref{guess:ninfinity}).
On the other hand, we also have that $\bbP_x(T_{1,2}>t, T_{1,3} >t) \sim c_{x_1} t^{-(1-\frac12 \delta)}$, so that Guess~\ref{guess:2} does not hold in the case $\delta\leq 0$: we have here that $\bbP_x(T_{1,2}>t) \sim (1+\frac{c_{x_2}}{c_{x_1}}) \bbP_x(T_{1,2}>t, T_{1,3} >t)$ and therefore $\theta_2=\tilde \theta_3 <\theta_3$.

\subsection{Relation to PDEs}
\label{sec:PDEs}

We mention in this section the relation of our question with some PDE problems, which provide other approaches for studying the persistence exponent.
We will not pursue these approaches further since we were not able to obtain any useful information from it.

\paragraph*{Laplace transform of the hitting time.}
In this paragraph we recall the classical fact that the Laplace transform of the  hitting time can be obtained by solving a PDE problem with boundary conditions. In our context, the PDE is not so complicated, but the difficulties lie in the boundary conditions. Let us denote by $\varphi_\lambda (x):=\bbE_x [e^{-\lambda H_n}]$ the Laplace transform of $H_n:= \min\big\{ T_{i,j} ,\ 1\leq i < j \leq n \big\}$, with starting point $X_0=x \in (\mathbb{R}_+)^n$. 
Since the stopping time $H_n$ is the hitting time of the set $\mathcal{A} := \bigcup_{i\neq j} \{x_i=x_j=0\}$ by the $(\bbR_+)^n$-valued diffusion process $(X_t)_{t\geq0}$, we classically have that~$\varphi_\lambda$ solves  
 \begin{equation}
 \label{edp1}
\begin{cases}
 \mathcal{L}^{n}_\delta \varphi (x) =\lambda \varphi (x) \,, & \quad x\in (\bbR_+^*)^n  \,,\\
\varphi(x)=1 \,,  & \quad  x \in \mathcal{A} \,, \\
\lim\limits_{x\to \infty} \varphi(x)=0 \,.
\end{cases}
 \end{equation}
where we recall that $\mathcal{L}^{n}_\delta$ is the generator of $n$ independent Bessels processes, see~\eqref{def:generator}.
 
When $\theta_n <2$, proving that $\bbP_x(H_n>t)\sim c_x t^{-\theta_n}$ as $t \to\infty$ is equivalent to proving that
\[
1-\varphi_\lambda(x) \sim \lambda^\theta h(x) \qquad \text{ as } \lambda\downarrow 0\,,
\]
where $h$ is expected to be $\mathcal{L}$-harmonic. 

Note that, by scale invariance of Bessel processes, we have $\varphi_\lambda(x) =\varphi_1(\lambda x)$, and the goal would thus be to find the behavior of $\varphi_1$  near $0$, where $\varphi_1$ is the ``good'' eigenfunction solving~\eqref{edp1} with $\lambda=1$.

\paragraph*{Link with Quasi-Stationary Distributions.}

There is a link, which is at first hand not so direct, between our problem and questions related to the theory of Quasi-Stationary Distributions (QSD). 
We recall in a nutshell this theory but we refer to \cite{CMSM13} and \cite{CV23} for detailed references.
Let $(Z_t)_{t\geq0}$ be a Markov process on a state space $\mathcal{X}$.
We assume that $\mathcal{X}$  can be decomposed in two parts: $\mathcal{X}_a$, the set of \emph{allowed} states and $\mathcal{X}_f := \mathcal{X} \setminus \mathcal{X}_a$, the set of \emph{forbidden} states, and we let $T:=\inf\{t>0, \ Z_t \in \mathcal{X}_f \}$ be the hitting time of $\mathcal{X}_f$. 
A distribution $\nu$ on $\mathcal{X}_a$ is said to be a Quasi-Stationary Distribution (QSD) if it is invariant under time evolution when the process is conditioned to survive in~$\mathcal{X}_a$, that is such that for all $t>0$ and $A\subset \mathcal{X}_a$,
\[
\bbP_{\nu}( Z_t \in A \mid T>t) = \nu(A).
\]
This condition implies that $T$ is exponentially distributed under $\bbP_\nu$, \emph{i.e.}\ there is some $\theta_\nu>0$ such that $\bbP_\nu (T>t)=e^{-\theta_\nu t}$, and formally the couple $(\nu, \theta_\nu)$ solves the spectral problem 
\[
\mathcal{L}^* \nu= -\theta_\nu \nu,
\] 
where $\mathcal{L}^*$ is the adjoint of the generator $\mathcal{L}$ of $Z_t$ killed when it reaches $\mathcal{X}_f$.  

The basic questions in this theory are the existence of QSD and of the so-called \emph{Yaglom} limits, that is, for some initial distribution~$\mu$, the convergence of the conditional laws $\bbP_\mu ( Z_t \in \cdot \mid T>t)$ towards some QSD measure when $t$ goes to infinity (note that Section~\ref{sec:twoBessels} could be framed in this spirit). 
In general, it is expected that, for all $x\in \mathcal{X}_a$, $\bbP_x ( Z_t \in \cdot \mid T>t)$ converges to $\nu_\star$, where $\nu_\star$ is the \emph{minimal} QSD measure, \emph{i.e.}\ the one associated with the eigenvalue $\theta_\star$ at the bottom of the spectrum of $-\mathcal{L}^*$.
Such a result would give that, for all $x\in \mathcal{X}_a$, 
\[
\bbP_x( T>t)= e^{-\theta_\star t (1+o(1))}\qquad \text{ as } t\to\infty \,.
\]

At first, our problem seems quite different, the hitting time $H_n$ of $\mathcal{A}=\bigcup_{i\neq j} \{x_i=x_j=0\}$ having a heavy-tailed distribution. 
But, as we will see in Section~\ref{Sec:Existence} below, we can perform an exponential time change by considering $\hat{X}_t:= e^{-t} X_{e^t-1}$, which remains a Markov process (it is a $n$-dimensional Cox-Ingersoll-Ross process).
Then, if $\hat{H}_n$ denotes the hitting time of $\mathcal{A}$ by $\hat{X}_t$, we get that having $\bbP(H_n >t)=t^{-\theta_n(1+o(1))}$ as $t\to\infty$ is equivalent to $\bbP(\hat{H}_n>t)=e^{-\theta_n t (1+o(1))}$.
Thus, following the theory of QSD, our persistence exponent is expected to be the bottom of the spectrum of~$-\hat{\mathcal{L}}^*$, the adjoint of the generator of $\hat{X}_t$ killed when it reaches~$\mathcal{A}$.
Unfortunately, up to our knowledge, there is no general result in the QSD theory which can be applied directly to our problem and provide the existence of $\theta_n$ (and the minimal QSD associated).
In our situation, the difficulties come from the fact that we consider a $n$-dimensional diffusion (with $n\geq 2$), taking values in an unbounded set, and also that the \emph{forbidden} set $\mathcal{A}$ is a proper subset of the boundary of the state space $(\bbR_+)^n$.
Note that a QSD theory would provide the existence of a persistence exponent $\theta_n$ and of a Yaglom limit, but \textit{not} the value (or estimates) on the exponent $\theta_n$.
Instead, we prove the existence of $\theta_n$ via some ``elementary'' sub-additive techniques and we estimate $\theta_n$ also via some ``elementary'' techniques.

\paragraph*{Link with a spectral problem on a bounded domain.}

In this paragraph we discuss another approach to obtain $\theta_n$, which exploits the symmetries of the problem and which reduces to a spectral problem for a certain operator on a \textit{bounded} domain. 
The advantage of this approach is that we reduce the number of variables by one, and also that we obtain a diffusion on a bounded domain; the caveat is that the diffusion is harder to study.
We only give an overview of the reduction one could perform and we provide some details in Appendix~\ref{app:spectralpb}

For simplicity, we consider the case $n=3$, and recall that we denote $X_t:=(X_t^{(1)},X_t^{(2)},X_t^{(3)})$. Anticipating a bit with notation, we further define the three elementary symmetric polynomials in the coordinates of $X_t$,
\[
S_t:= X_t^{(1)} + X_t^{(2)} +X_t^{(3)}\,,\quad A_t:=X_t^{(1)} X_t^{(2)} + X_t^{(2)} X_t^{(3)} +  X_t^{(3)} X_t^{(1)}\,,\quad
P_t:= X_t^{(1)} X_t^{(2)} X_t^{(3)}\,,
\] 
which have respective homogeneity $1$, $2$ and $3$. 
Note that, for all $t\geq 0$, $X_t$ is entirely determined, up to some permutation, by $(S_t,A_t,P_t)$. Also, since the Bessels processes are independent we can check that the process $(S_t,A_t,P_t)_{t\geq 0}$ is itself a diffusion process, whose generator can be computed explicitly, see~\eqref{eq:generatorAPT} for a formula.

Expressed with those symmetrical coordinates, the hitting time $H_3$ can be expressed as $H_3:= \inf\{t\geq 0, A_t=0\}$ (notice that $X_t \in \mathcal{A}$ if and only if $A_t=0$).
Moreover, as a consequence of the symmetries of the problem, we can factorize the dynamics of $(S_t,A_t,P_t)$ by $(S_t)_{t\geq 0}$, which plays the role of a ``radial'' process, and some ``angular'' (\emph{i.e.}\ without scaling) process $(\bar{A}_t, \bar{P}_t):=(A_t/S_t^2, P_t/S_t^3)$.
It turns out that one can write the angular ($2$-dimensional) process as a time-changed  diffusion $(U_t,V_t)$, independent of $(S_t)_{t\geq0}$ and whose generator $\bar{\mathcal{L}}$ can also be computed (again, see Appendix~\ref{app:spectralpb} for details) --- this in analogy with what is done in Section~\ref{sec:twoBessels}, see~\eqref{eq:SU}, in the case of $n=2$ Bessels.
Also, one can show that the angular process $(\bar{A}_t, \bar{P}_t)_{t\geq 0}$ evolves in a bounded domain~$\bar{\mathcal{T}} \subset (\bbR_+)^2$ (with boundary) which can be determined explicitly, see Figure~\ref{fig:domain} in Appendix~\ref{app:spectralpb} for an illustration.

Now we can relate the persistence exponent to a spectral problem for the generator $\bar{\mathcal{L}}$ on the \textit{bounded} domain $\bar{\mathcal{T}}$: finding $(\mu, \varphi)$ such that $\bar{\mathcal{L}}\varphi= \mu \varphi$ with $\varphi$ a non-negative function on $\bar{\mathcal{T}}$ that vanishes only when $u=0$, then one should be able to relate the eigenvalue $\mu$ to the persistence exponent $\theta_3$ by the relation $\theta_3(\theta_3-1+3\delta/2)+\mu=0$.
We refer to Appendix~\ref{app:spectralpb} for details, but we were not able to exploit further this approach, the spectral problem seeming out of our reach.

\section{Some preliminaries}
\label{sec:prelim}

\subsection{A comparison theorem}

We state in this section a comparison theorem for Bessel processes with varying dimensions.
The proof is standard and can be found in~\cite[Ch.~6]{MR1011252}.
  We consider here a probability space $(\Omega, \mathcal{F}, \mathbb{P})$ supporting a Brownian motion $(W_t)_{t\geq0}$ and we denote by $\mathbb{F} = (\mathcal{F}_t)_{t\geq0}$ the filtration generated by this Brownian motion, after the usual completions. Let $(D_t^{1})_{t\geq0}$ and $(D_t^{2})_{t\geq0}$ be two $\mathbb{F}$-adapted non-negative processes. Let also $(Z_t^1)_{t\geq0}$ and $(Z_t^2)_{t\geq0}$ be two processes such that, if it exists, \textit{a.s.} for any $t\geq0$,
\[
 Z_t^1 = z_1 + 2\int_0^t \sqrt{Z_s^1} \dd W_s + \int_0^t D_s^1 \dd s \quad \text{and} \quad Z_t^2 = z_2 + 2\int_0^t \sqrt{Z_s^2} \dd W_s + \int_0^t D_s^2 \dd s
\]
for some $z_1, z_2 \geq 0$. We have the following comparison theorem.
\begin{proposition}[Thm.~1.1 in Ch.~6 of~\cite{MR1011252}]
\label{prop:compar_theorem}
 If $z_1 \leq z_2$ and almost surely for any $t \geq0$, $D_t^1 \leq D_t^2$, then almost surely for any $t \geq0$, $Z_t^1 \leq Z_t^2$.
\end{proposition}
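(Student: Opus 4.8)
The plan is to prove the comparison theorem by the classical Yamada--Watanabe-type argument, adapted to the case where only the drift terms differ and the diffusion coefficients are the (non-Lipschitz but H\"older-$\tfrac12$) map $z\mapsto 2\sqrt{z}$. First I would set $\Delta_t := Z_t^1 - Z_t^2$ and write, from the two integral equations,
\[
\Delta_t = (z_1-z_2) + 2\int_0^t \big(\sqrt{Z_s^1}-\sqrt{Z_s^2}\big)\,\dd W_s + \int_0^t \big(D_s^1 - D_s^2\big)\,\dd s \,.
\]
Since $z_1 - z_2 \le 0$ and $D_s^1 - D_s^2 \le 0$ almost surely, the goal is to show $\Delta_t \le 0$ for all $t$, i.e.\ that $\Delta_t^+ \equiv 0$. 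The standard device is to approximate $x\mapsto x^+$ by a sequence of $C^2$ functions $\psi_n$ with $\psi_n(x)=0$ for $x\le 0$, $0\le \psi_n'\le 1$, $\psi_n(x)\uparrow x^+$, and $0\le \psi_n''(x)\le \tfrac{2}{n x}$ supported on $x\in(a_{n},a_{n-1})$ for a suitable decreasing sequence $a_n\downarrow 0$ with $\int_{a_n}^{a_{n-1}} \tfrac{\dd x}{x} = n$ (the Yamada--Watanabe sequence).

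Next I would apply It\^o's formula to $\psi_n(\Delta_t)$. The drift contribution from the $\dd s$-term is $\psi_n'(\Delta_s)(D_s^1-D_s^2)\le 0$ since $\psi_n'\ge 0$ and $D_s^1-D_s^2\le 0$; this term we simply discard. The martingale term has zero expectation (after a standard localization). The crucial term is the It\^o correction
\[
\tfrac12 \int_0^t \psi_n''(\Delta_s)\cdot 4\big(\sqrt{Z_s^1}-\sqrt{Z_s^2}\big)^2 \,\dd s \,.
\]
Here one uses the key inequality $\big(\sqrt{a}-\sqrt{b}\big)^2 \le |a-b|$ for $a,b\ge 0$, so the integrand is bounded by $2\psi_n''(\Delta_s)|\Delta_s| \le 2\psi_n''(\Delta_s)\Delta_s^+$ (since $\psi_n''$ is supported on positives) $\le \tfrac{4}{n}$ by the defining property of $\psi_n$. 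Hence this term is at most $\tfrac{4t}{n}$ in expectation. Taking expectations, localizing, and then letting $n\to\infty$ gives $\bbE[\Delta_t^+]\le \lim_n \tfrac{4t}{n} = 0$, so $\Delta_t^+=0$ a.s.\ for each fixed $t$, and by path-continuity $\Delta_t\le 0$ for all $t\ge 0$ simultaneously, which is the claim.

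The main obstacle is purely technical: justifying that the local-martingale part of $\psi_n(\Delta_t)$ is a true martingale (or at least that the localization argument closes), since a priori we have no integrability on $Z^1, Z^2$ beyond what the SDEs give. This is handled in the usual way by introducing stopping times $\sigma_m := \inf\{t: Z_t^1 + Z_t^2 \ge m\}$, running all the estimates up to $t\wedge\sigma_m$ where everything is bounded, concluding $\bbE[\psi_n(\Delta_{t\wedge\sigma_m})]\le \tfrac{4t}{n}$, then sending $n\to\infty$ to get $\Delta_{t\wedge\sigma_m}^+=0$, and finally $m\to\infty$ using $\sigma_m\to\infty$ a.s.\ (which holds since solutions do not explode in finite time). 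One should also note at the outset that the hypothesis ``if it exists'' in the statement means we are only asserting the comparison on the event where both processes are well-defined as continuous semimartingales, so existence itself is not part of what must be shown. Since, as the paper notes, this is Theorem~1.1 in Chapter~6 of~\cite{MR1011252}, in the write-up I would either cite it directly or include the short argument above for completeness.
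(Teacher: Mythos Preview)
Your proposal is correct and is precisely the standard Yamada--Watanabe argument; the paper does not give its own proof but simply cites Theorem~1.1 in Chapter~6 of~\cite{MR1011252}, which is exactly the argument you outline. There is nothing to compare---you have supplied the proof the paper defers to.
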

%

\subsection{Existence of the persistence exponent}\label{Sec:Existence}

Let us prove~Proposition~\ref{prop-existence-theta} in this section.
First, we perform some  exponential time change of $(X_t)_{t\geq0}$ and consider the process $(\hat{X}_t:= e^{-t} X_{e^{t}-1})_{t\geq 0}$  which is still a Markov process  (known as a Cox-Ingersoll-Ross process, see for instance \cite{GY03}) generated by
\[
\hat{\mathcal{L}}_n^{\delta} := 2 \sum_{i=1}^n x_i \frac{\partial^2}{\partial x_i^2} + (\delta-x_i) \frac{\partial}{\partial x_i} \,.
\]
Then, if we denote $\hat{H}_n:=\inf\{t\geq 0, \exists i\neq j, \hat{X}_t^{(i)}= \hat{X}_t^{(i)}=0 \}$, we naturally have  $\bbP_x( H_n> t)=\bbP_{x}(\hat{H}_n> \log(1+t))$.
Therefore, to prove Proposition~\ref{prop-existence-theta} we simply need to show that, for any $x \in (\bbR_+)^n$,
\begin{equation}\label{limit-theta}
\lim_{t\to\infty} \frac{1}{t}\log \bbP_x(\hat{H}_n> t) = -\theta_n \,.
\end{equation}
Notice also that since $t\mapsto \bbP_x(\hat{H}_n> t)$ is non-increasing, one can consider the limit in~\eqref{limit-theta} only along integers.

Before we prove~\eqref{limit-theta}, let us stress that the limit (if it exists) does not depend on $x$.
For $x,y\in (\bbR_+)^n$, let us write $x\leq y$ if $x_i\leq y_i$ for all $i=1,\ldots, n$. 
Then, by the comparison property of Proposition~\ref{prop:compar_theorem} (applying it componentwise), we obtain that, for any starting point $y\geq x$, $\bbP_{y}( \hat{H}_n> t) \geq \bbP_{x}( \hat{H}_n> t)$.
Therefore, for $x,x'\in (\bbR_+^*)^n$, the Markov property gives that
\begin{align*}
\bbP_{x}(\hat H_n >1+t) & \geq \bbP_{x}\big(\hat H_n >1+t , \hat X_1 \geq x'\big) = \bbE_{x}\Big[\ind_{\{\hat H_n >1 , \hat X_1 \geq x' \}} \bbP_{\hat X_1} \big( \hat H_n >t\big) \Big] \\
& \geq \bbP_x\big( \hat H_n >1 , \hat X_1 \geq x'\big) \bbP_{x'} \big( \hat H_n >t\big)  =: C_{x,x'}\bbP_{x'} \big( \hat H_n >t\big) \,,
\end{align*}
where we have used the comparison inequality for the second line.
This shows that, for any $x,x'\in (\bbR_+^*)^n$ and $t>1$,
\[
C_{x',x} \bbP_{x'}(\hat H_n > t-1)\leq \bbP_{x}(\hat H_n >t) \leq C_{x',x}^{-1} \bbP_{x'}(\hat H_n > t+1 )
\]
so that the limit in~\eqref{limit-theta}, if it exists, does not depend on $x$.

\smallskip
Now, let $x \in (\bbR_+)^n$. To prove~\eqref{limit-theta}, let us introduce, for $t>0$,
\[
q_x(t) :=\bbP_x \big( \hat{H}_n> t, \hat{X}_t \geq x \big) \,.
\]
We now show that $(\log q_x(t))_{t\geq 0}$ is super-additive.
Indeed, by the Markov property, we have that 
\[
 q_x(t+s) = \bbE_x \Big[\ind_{\{\hat{H}_n> t\}} \bbP_{\hat{X}_t}( \hat{H}_n> s, \hat{X}_s \geq x) \Big] \geq \bbE_x \Big[\ind_{\{\hat{H}_n> t, \hat{X}_t\geq x\}} \bbP_{\hat{X}_t}( \hat{H}_n> s, \hat{X}_s \geq x) \Big] \,.
\]
Now, by comparison (applying Proposition~\ref{prop:compar_theorem} componentwise), we obtain that, for any starting point $y\geq x$, $\bbP_{y}( \hat{H}_n> s, \hat{X}_s \geq x) \geq q_x(s)$.
We therefore end up with $q_x(t+s) \geq q_x(t)q_x(s)$ for any $s,t\geq 0$, which shows the super-additivity and thus that the limit
\[
\theta_n := -\lim_{t\to\infty} \frac1t \log q_x(t)
\] 
exists (the limit is taken along integers).
%

\smallskip
We can now compare $q_x(t)$ with the original probability $\bbP_x(\hat H_n >t)$.
First of all, we clearly have that $q_x(t) \leq \bbP_x(\hat H_n > t )$, so that 
$\liminf_{t\to\infty} \frac1t \log \bbP_x(\hat H_n > t) \geq -\theta_n$.

The other bound is a bit more subtle.
Recall that $\mathcal{A} := \bigcup_{i\neq j} \{x_i=x_j=0\}$ and let us define 
\[
\mathcal{A}_{\delta} = \big\{y = (y_1,\ldots, y_n) \in (\bbR_+)^n ,   \exists i<j  \text{ such that } \max(y_i,y_j)< \delta \big\}
\] for some $\delta>0$.
Then, we fix $\gep>0$ and $\delta>0$, and we consider the upper bound
\begin{equation}
\label{eq:splitproba}
\bbP_x(\hat H_n > t ) \leq \bbP_x\big(\hat X_s\in \mathcal{A}_{\delta} \text{ for all } s \in [(1-\gep)t,t-1]  \big) + \bbP_x \big(\hat H_n > \tau_{\delta}, \tau_{\delta} \leq t-1  \big) \,,
\end{equation}
where we have set $\tau_{\delta} := \inf\{ s > (1-\gep)t ,  \hat X_s \notin \mathcal{A}_{\delta}\}$.
We now estimate both probabilities.

For the first one, applying Markov's inequality  iteratively every unit of time, we get that
\begin{multline*}
 \bbP_x\big(\hat X_s\in \mathcal{A}_{\delta} \text{ for all } s \in [(1-\gep)t,t-1\big]  \big)  \leq  \Big( \sup_{y\in \mathcal{A}_{\delta}} \bbP_y\big( \hat X_s\in \mathcal{A}_{\delta} \text{ for all } s \in [0,1] \big) \Big)^{\lfloor \gep t -1\rfloor} \\
  = \exp\big(- C_{\delta} \lfloor \gep t-1 \rfloor \big).
\end{multline*}
where the constant $C_{\delta} = - \log (\sup_{y\in \mathcal{A}_{\delta}} \bbP_y( \hat X_s\in \mathcal{A}_{\delta} \text{ for all } s \in [0,1]))$ goes to $+\infty$ as $\delta \downarrow 0$. 
 Indeed, observe that, for all $y\in \mathcal{A}_{\delta}$,
\begin{align*}
 \bbP_y( \hat X_s\in \mathcal{A}_{\delta} \text{ for all } s \in [0,1])  \leq \bbP_0\Big( \sup_{s\in[0,1]} \min_{i\neq j}\max(\hat X_s^{(i)},\hat X_s^{(j)} ) < \delta^2\Big)\,,
\end{align*}
by comparison.
Now, the upper bound converges to $\bbP_0( \sup_{s\in[0,1]} \min_{i\neq j}\max(\hat X_s^{(i)},\hat X_s^{(j)} ) =0 )$ as $\delta\downarrow 0$, which is equal to $0$.
 \black

For the other probability, let us set 
\[
p_{x, \delta}(s):= \inf_{ y\in \partial \mathcal{A}_{\delta}} \bbP_y(\hat H_n >s, \hat X_s \geq x) \,.
\]
Then, we have that
\[
\bbP_x \big(\hat H_n > \tau_{\delta}, \tau_{\delta} \leq t-1  \big)  \leq \bbE_x \Big[ \ind_{\{\hat H_n > \tau_{\delta},\tau_{\delta} \leq t-1 \}}  \frac{p_{x,\delta}(t-\tau_{\delta})}{ \inf_{s \in [1,\gep t]} p_{x,\delta}(s)} \Big] \,,
\]
and, by the strong Markov property,
\[
\begin{split}
\bbE_x \big[ \ind_{\{\hat H_n > \tau_{\delta},\tau_{\delta} \leq t-1 \}}  p_{x,\delta}(t-\tau_{\delta}) \big] 
& \leq  \bbE_x \big[ \ind_{\{\hat H_n > \tau_{\delta},\tau_{\delta} \leq t-1 \}} \bbP_{\hat X_{\tau_{\delta}}} \big( \hat H_n > t-\tau_{\delta} , \hat X_{t-\tau_{\delta}} \geq x\big) \big] \\
& \leq \bbP_x \big(\hat H_n > t, \tau_{\delta} \leq t-1,  \hat X_t \geq x  \big) \leq q_x(t) \,.
\end{split}
\]

Now, notice that by the Markov property and by comparison (see Proposition~\ref{prop:compar_theorem}), for $s>1$ we have that
$\bbP_y(\hat H_n >s, \hat X_s \geq x) \geq  \bbP_y(\hat H_n >1, \hat X_1 \geq x)  \bbP_x(\hat H_n >s-1, \hat X_{s-1} \geq x)$, so that 
\[
p_{\delta, x}(s) \geq C_{\delta, x} q_x(s-1)\,,
\]
with $C_{\delta, x}:= \inf_{\{ y\in \partial \mathcal{A}_{\delta} \}} \bbP_y(\hat H_n >1, \hat X_1 \geq x) >0$. 

Indeed, for any $y \in \partial \mathcal{A}_{\delta}$, there is at most one~$i$ with $y_i<\delta$: since $\{\hat H_1 >1\} \supset \bigcap_{j\neq i}\{\forall s \in [0,1]\; X_s^{(j)}>0\}$, we get by independence, then by comparison, that
\begin{align*}
\bbP_y\big(\hat H_n >1, \hat X_1 \geq x \big) & \geq 
\bbP_{y_i}\big( \hat X_1^{(i)} \geq x_i \big) \prod_{j\neq i} \bbP_{y_j}\big( \forall s \in [0,1]\; \hat{X}_s^{(j)}>0, \hat X_1^{(j)} \geq x_j \big)\\
& \geq \bbP_{0}\big( \hat X_1^{(1)} \geq  \|x\|_{\infty} \big)  \bbP_{\delta}\big( \forall s \in [0,1]\; \hat{X}_s^{(1)}>0, \hat X_1^{(1)} \geq \|x\|_{\infty} \big)^{n-1} \,,
\end{align*}
which is a positive lower bound on $C_{\delta,x}$.
\black
All together, we obtain that
\[
\bbP_x \big(\hat H_n > \tau_{\delta}, \tau_{\delta} \leq t-1  \big)  \leq C_{\delta,x}^{-1} \;\frac{1}{\inf_{s\in [0,\gep t]} q_x(s)}\; q_x(t)  \,.
\]

Going back to~\eqref{eq:splitproba} and using that $\lim_{t\to\infty} \frac1t \log q_x(t) = -\theta_n$, we conclude that for $t$ sufficiently large (how large may depend on $\gep,\delta,x$), we have $\inf_{s\in [0,\gep t]} q_x(s) \geq e^{-(\theta_n +\gep)\gep t}$ and $q_x(t) \leq e^{-(\theta_n-\gep)t}$ so,
\[
\bbP_x(\hat H_n > t ) \leq e^{ -\frac12 C_{\delta} \gep t} + e^{- ( \theta_n -\gep -\gep \theta_n-\gep^2)t} \,.
\]
Now, for any fixed $\gep>0$, we can choose $\delta$ small enough so that $\frac12 C_{\delta} \gep \geq \theta_n -\gep -\gep \theta_n-\gep^2$, which gives that $\limsup_{t\to\infty} \frac1t \log \bbP_x(\hat H_n > t) \leq - ( \theta_n -\gep -\gep \theta_n-\gep^2)$.
This concludes the proof, since $\gep$ is arbitrary.
\qed

\subsection{General strategy of the proof}
\label{sec:strategy}

We consider a probability space $(\Omega, \mathcal{F}, \mathbb{P})$ supporting $(n+1)$ independent Brownian motions $W^{(0)},W^{(1)}, \ldots, W^{(n)}$. We denote $(\mathcal{F}_t)_{t\geq0}$ the filtration generated by these Brownian motions, after the usual completions. On this filtered probabilty space, we consider $n$ independent squared Bessel processes of dimension $\delta \in(0,1)$, solution of
\[
 \dd X_t^{(i)} = 2 \sqrt{X_t^{(i)}} \dd W_t^{(i)} + \delta \dd t.
\]
Our general strategy is to find some auxiliary one-dimensional stochastic process $(Z_t)_{t\geq 0}$, which hits $0$ exactly at time $H_n$, that we are able to compare with a (time-changed) squared Bessel process, for which the first hitting time of $0$ is well-understood.

More precisely, let $\gp: (\bbR_+)^n \to \bbR_+$ be a smooth function, and define for all $t\geq 0$,
\[
Z_t:= \gp\big( X_t^{(1)},\ldots, X_t^{(n)} \big) =: \gp(X_t)   \,.
\]
Then, using the evolution equation of $X^{(i)}$ and applying It\^o's formula, we obtain the evolution equation of~$Z_t$:
\[
\dd Z_t = 2 \sum_{i=1}^n \sqrt{X_t^{(i)}} \, \frac{\partial \gp}{\partial x_i} (X_t) \, \dd W_t^{(i)} + \Big( \delta \sum_{i=1}^n \frac{\partial \gp}{\partial x_i}(X_t) + 2 \sum_{i=1}^n  X_t^{(i)} \,\frac{\partial^2 \gp}{\partial x_i^2}(X_t) \Big) \dd t \,.
\]
Let us set
\begin{equation}\label{eq:def_y}
 Y_t :=  \sum_{i=1}^n X_t^{(i)} \Big(\frac{\partial \gp}{\partial x_i}(X_t)\Big)^2.
\end{equation}
Recalling that $W^{(0)}$ is a Brownian motion independent from the rest, we define
\[
 \dd W_t :=\bm{1}_{\{Y_t = 0\}}\dd W_t^{(0)} +  \sum_{i=1}^n \bm{1}_{\{Y_t > 0\}}\frac{\sqrt{X_t^{(i)}} \frac{\partial \gp}{\partial x_i} (X_t)}{\sqrt{Y_t}}  \dd W_t^{(i)} \,,
\]
which is an $(\mathcal{F}_t)_{t\geq0}$ Brownian motion since it is a local martingale with quadratic variation $\langle W \rangle_t =t$. Note that whenever $Y_t = 0$ for some $t\geq0$, we have $X_t^{(i)} (\frac{\partial \gp}{\partial x_i}(X_t))^2 = 0$ for every $i \in \{1, \ldots, n\}$. Therefore, if we set 
\[
D^{(1)}_t =\sum_{i=1}^n \frac{\partial \gp}{\partial x_i}(X_t)
\quad \text{and}\quad  
D^{(2)}_t =\sum_{i=1}^n  X_t^{(i)} \,\frac{\partial^2 \gp}{\partial x_i^2}(X_t) \,,
\] 
we get that
\begin{equation}\label{eq:evolZ_1}
 \dd Z_t = 2 \sqrt{Y_t} \dd W_t + \big(\delta D^{(1)}_t + 2D^{(2)}_t\big)\dd t.
\end{equation}
Let us now define the processes $V$ and $D$ by
\begin{equation}
\label{def:VD}
V_t :=  \frac{Y_t}{Z_t} \,,\quad \text{and} \quad D_t := \frac{\delta D^{(1)}_t + 2 D^{(2)}_t}{V_t} \,.
\end{equation}
We will now make the following assumption on the function $\gp$ which will be verified in practice.
\begin{itemize}
 \item[$\mathbf{(H)}$] \namedlabel{hyp:H}{(H)}
 The function $\gp$ is such that \textit{a.s.}\ $\mathrm{Leb}(\{t \geq 0, \: Y_t = 0\}) = 0$ and $V_t < \infty$ for any $t \geq0$.
\end{itemize}
Under this assumption, it turns out that we can rewrite \eqref{eq:evolZ_1} as
\begin{equation}\label{def:evolutionZ}
\dd Z_t = 2 \sqrt{Z_t V_t}\, \dd W_t + D_t V_t \, \dd s \,.
\end{equation}
The advantage of the formulation~\eqref{def:evolutionZ} is that it formally looks like the evolution equation of a \emph{time-changed} square Bessel process, with \emph{varying} dimension $D_t$.
Our objective is now be to find functions $\gp$ (one for the upper bound, one for the lower bound) such that:
\begin{itemize}
\item the function $\gp$ verifies $\gp(x_1,\ldots, x_n) =0$ if and only if\footnote{In fact, one actually need only one of the implication depending on whether one is interested in the upper or the lower bound, but we stick to the \textit{if and only if} formulation for simplicity.} $x_i=x_j=0$ for some $i\neq j$, so that $H_n=\inf\{t >0, Z_t =0\}$;
\item the ``velocity'' $V_t$ and the ``dimension'' $D_t$ can be controlled, namely one can obtain explicit bounds on them.
\end{itemize}
 
\noindent
Let us set
\[
\rho_t := \int_0^{t} V_u \dd u \,,
\]
which corresponds to the time-change in~\eqref{def:evolutionZ}. Thanks to Assumption~\ref{hyp:H} and the definition~\eqref{def:VD} of $V$, we see that \textit{a.s.}\ $\mathrm{Leb}(\{t \geq 0, \: V_t = 0\}) = 0$. 
This implies that $\rho$ is an increasing and continuous time-change. Let us denote by $\tau$ its inverse and let us set $K_t = Z_{\tau_t}$ as well as $\mathcal{B}_t = \int_0^{\tau_t}\sqrt{V_s} \dd W_s$ which is an $(\mathcal{F}_{\tau_t})_{t\geq0}$ Brownian motion. We classically have that
\[
 \dd K_t = 2 \sqrt{K_s} \dd \mathcal{B}_s + D_{\tau_t} \dd t.
\]

\paragraph{For the upper bound on $\bbP_x(H_n>t)$.}

Let us assume here that there is some $\delta_+\in(0,2)$ such that $D_t \leq \delta_+$ uniformly in $t$. Then, if we define
\[
 \dd Q^{(\delta_+)}_t = 2 \sqrt{Q^{(\delta_+)}_t} \dd \mathcal{B}_t + \delta_+ \dd t \,,
\]
which is a squared Bessel process of dimension $\delta_+$, we get by comparison, see Proposition~\ref{prop:compar_theorem}, that \textit{a.s.}\ $K_t \leq Q^{(\delta_+)}_t$ for any $t\geq0$. It follows that $Z_t \leq Q_{\rho_t}^{(\delta_+)}$ for any $t\geq0$. \black Denoting $T_0(Z) := \inf\{t>0, Z_t=0\}$ for a stochastic process $(Z_t)_{t\geq 0}$,  we therefore get that
\[
\bbP_x(H_n>t)  = \bbP_x\big(T_0(Z)>t \big) \leq \bbP_x\big( T_0\big(Q^{(\delta_+)}\big) >\rho_t\big) \,, 
\]
and it remains to control $\rho_t$, and in particular show that it cannot be too small; let us stress that one difficulty is that $(\rho_t)_{t\geq 0}$ is in general not independent from $(Q^{(\delta_+)}_t)_{t\geq 0}$.
We will then need a lemma as follows.

\begin{lemma}
\label{lem:rho-upper}
There is some $\kappa>0$ such that, for any $p\geq 1$ there exists a constant $C_p=C_{x,p}$ such that, for any $t\geq 1$ and any $\gep<1$
\[
\bbP_x\big( \rho_t \leq \gep \, t^{\kappa} \big) \leq  C_p \gep^{p} \,.
\]
\end{lemma}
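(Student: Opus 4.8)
The plan is to bound $\rho_t = \int_0^t V_u\,\dd u$ from below by restricting the integral to a well-chosen interval $[at, bt]$ with $0<a<b<1$ and showing that on a high-probability event all the coordinate processes $X^{(i)}_u$ are simultaneously of order $t$ on that interval, which forces $V_u$ to be bounded below by a positive power of $t$. Concretely, by scaling, $X^{(i)}_{ct}/t$ has the law of a squared Bessel process of dimension $\delta$ evaluated at time $c$ (started from $x_i/t$), so the event that $X^{(i)}_u \in [\alpha t, \beta t]$ for all $u\in[at,bt]$ and all $i$ has probability bounded below by a constant $c_0=c_0(a,b,\alpha,\beta,x)>0$ uniformly in $t\geq 1$ — the only subtlety being the dependence on the (shrinking) starting point $x_i/t$, which is handled by the comparison theorem (Proposition~\ref{prop:compar_theorem}) since decreasing the starting point only decreases the process, and one can bound $X^{(i)}$ below by a Bessel started from $0$. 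On this event one has an explicit lower bound $V_u \geq c_1 t^{\kappa-1}$ for some $\kappa>0$ depending on the homogeneity of $\varphi$: recall $V_t = Y_t/Z_t = \sum_i X^{(i)}_t (\partial_i\varphi)^2/\varphi(X_t)$, and if $\varphi$ is homogeneous of degree $m$ then $V$ is homogeneous of degree $m-1$ in $x$, so $V_u$ scales like $t^{m-1}$; thus $\rho_t \geq (b-a)t \cdot c_1 t^{m-2} = c_2 t^{m-1}$, giving $\kappa = m-1$.

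The difficulty is that $c_0$ above is only a \emph{fixed} lower bound, whereas the lemma demands $\bbP_x(\rho_t \leq \gep t^\kappa) \leq C_p\gep^p$ for \emph{all} $p\geq 1$, i.e.\ a bound that improves polynomially as $\gep\to 0$. To get this, I would iterate: partition a macroscopic time interval, say $[\tfrac12 t, t]$, into $N \asymp \log(1/\gep)$ disjoint blocks $I_k$ of length $\asymp t/N$ each, and on block $I_k$ run the above argument \emph{conditionally} on $\mathcal{F}_{\inf I_k}$. By the Markov property and the comparison theorem, regardless of the (nonnegative) starting configuration at the beginning of block $I_k$, the conditional probability that all coordinates stay in $[\alpha t,\beta t]$ throughout $I_k$ — hence that $\int_{I_k} V_u\,\dd u \geq c_3 t^\kappa/N$ — is bounded below by a constant $c_0'>0$ uniform in $t$ and $k$ (again using that lowering the starting point only helps the lower bound on the Bessel, and that we only need the process to not be too small \emph{and} not too large, the latter controlled by a crude maximal/Markov inequality with a uniform constant). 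These block events are not independent, but by the Markov property their "failures" are stochastically dominated by independent Bernoulli$(1-c_0')$ trials, so the probability that \emph{every} block fails is at most $(1-c_0')^N$. If no block fails, then $\rho_t \geq c_3 t^\kappa/N \geq c_3 t^\kappa/(C\log(1/\gep))$, which exceeds $\gep t^\kappa$ once $N$ is chosen appropriately; chasing the constants, $(1-c_0')^N \leq \gep^p$ for a suitable choice $N = C_p\log(1/\gep)$, and adjusting $c_3$ absorbs the logarithmic factor.

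The main obstacle I anticipate is making the per-block lower bound on $\int_{I_k} V_u\,\dd u$ genuinely uniform in the starting configuration at $\inf I_k$ — in particular one must rule out the bad case where, at the start of a block, one coordinate is already tiny (near a potential collision): there the comparison-from-below argument still gives that the coordinate is bounded below by a Bessel started from $0$, which is of order $t/N$ after a time of order $t/N$ with positive probability, but one needs this probability to not degenerate, and one needs to ensure $Z_t>0$ throughout (i.e.\ that we have not already had a joint return), which is automatic on the survival event we are implicitly working on. A secondary technical point is controlling $V_u$ from below in terms of the coordinates being of order $t$: this requires that the chosen function $\varphi$ satisfies a quantitative nondegeneracy bound $\sum_i x_i(\partial_i\varphi(x))^2 \geq c\,\varphi(x)\cdot \|x\|^{m-1}$ away from $\mathcal{A}$, which should follow from the explicit form of $\varphi$ used for the upper bound together with Assumption~\ref{hyp:H}; I would verify this on the region $\{\alpha t \leq x_i \leq \beta t\ \forall i\}$ where it is a compact (after rescaling) positivity statement.
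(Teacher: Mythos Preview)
There is a genuine gap in your block iteration. Your key claim is that, conditionally on $\mathcal{F}_{\inf I_k}$ and \emph{regardless of the starting configuration}, the event $\{X^{(i)}_u \in [\alpha t, \beta t]\ \forall u\in I_k,\ \forall i\}$ has probability bounded below by some $c_0'>0$ uniformly in $k$ (and hence in $N$). This fails: the block $I_k$ has length $\asymp t/N$, and if at the start of the block some coordinate $X^{(i)}$ equals $0$ (which you explicitly allow), then by scaling $X^{(i)}$ at the end of the block is of order $t/N$, not of order $t$. Concretely, the probability that a squared Bessel started from $0$ reaches level $\alpha t$ within time $t/(2N)$ equals $\bbP_0(\sup_{[0,1]}X_u \geq 2\alpha N)$, which decays to $0$ as $N\to\infty$. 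So $c_0'=c_0'(N)\to 0$, and your product bound $(1-c_0')^N$ no longer goes to $0$. You half-acknowledge this in your obstacle paragraph (the coordinate is ``of order $t/N$ after a time of order $t/N$''), but the main argument still uses the mismatched threshold $[\alpha t,\beta t]$ and you do not resolve the discrepancy. There is a symmetric problem on the upper side: if a coordinate starts the block at a value much larger than $t$, it will not drop to $\beta t$ in time $t/N$ either, so the two-sided containment event cannot be made uniformly likely. (A minor slip: if $\gp$ is homogeneous of degree $m$ then $V$ has degree $m-1$, so $\rho_t$ scales like $t\cdot t^{m-1}=t^{m}$ and $\kappa=m$, not $m-1$.)

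The paper takes a different route that sidesteps these issues. Rather than controlling all coordinates simultaneously, it first bounds $V_t$ below by a simple functional using the explicit form of $\gp$: for $n=3$ one has $V_t=S_t+3P_t/A_t\geq S_t\geq X_t^{(1)}$, and for general $n$, $V_t\geq \Pi_{n-2}(X_t)\geq\prod_{i=1}^{n-2}X_t^{(i)}$. After scaling this reduces the lemma to bounding $\bbP_0\big(\int_0^1 \prod_{i\leq n-2} X_u^{(i)}\,\dd u\leq\gep\big)$. For a single Bessel the paper does run a block argument, but with the scales \emph{matched}: one partitions $[0,1]$ into $\gep^{-\gamma}$ blocks of length $\gep^{\gamma}$ and asks that the process stay \emph{below} $\gep^{\gamma}$ on each block --- by scaling and comparison this has probability $\leq e^{-c}$ per block uniformly in the starting point, yielding a stretched-exponential bound $e^{-c\gep^{-\gamma}}$; a separate H\"older-continuity argument treats the complementary case $\sup_{[0,1]}X_u>\gep^{\gamma}$. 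For products of several Bessels the paper proceeds by induction on the number of factors, exploiting that the product is itself a time-changed squared Bessel.
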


\noindent
With the help of this lemma, we then get that, for any $p\geq 1$ (large),
\begin{align*}
\bbP_x(H_n>t)\leq \bbP_x\big( T_0\big(Q^{(\delta_+)}\big) >\rho_t\big) 
& \leq \bbP_x\Big( \rho_t \leq t^{-1/\sqrt{p}}\, t^{\kappa}\Big) + \bbP_x\Big( T_0\big(Q^{(\delta_+)}\big) > t^{-1/\sqrt{p}}\, t^{\kappa} \Big) \\
& \leq  C_p t^{- \sqrt{p}} +  c t^{(2-\delta_+)/\sqrt{p}} \;t^{- \kappa(1-\frac12 \delta_+)}\,,
\end{align*}
where we have also used~\eqref{eq:oneprocess} for the last inequality.
This strategy therefore shows that, for any $\eta>0$, choosing $p$ sufficiently large yields $\bbP_x(H_n>t)\leq c' t^{-\theta_- +\eta}$ with $\theta_- := \kappa (1-\frac12 \delta_+)$; here $\delta_+$ is an upper bound on $D_t$ and $\kappa$ gives the scale exponent of $\rho_t$ and appears in Lemma~\ref{lem:rho-upper}.
Since~$\eta$ is arbitrary, this shows that $\theta_n \geq \theta_-$.

\paragraph{For the lower bound on $\bbP_x(H_n>t)$.}

On the other hand, if we assume that there is some~$\delta_-$ such that $D_t \geq \delta_-$ uniformly in $t$, then, just as for the upper bound, we define
\[
 \dd Q^{(\delta_-)}_t = 2 \sqrt{Q^{(\delta_-)}_t} \dd \mathcal{B}_t + \delta_- \dd t
\]
and we get by comparison, thanks to Proposition~\ref{prop:compar_theorem} again, that $K_t \geq Q^{(\delta_-)}_t$ for any $t \geq0$, which yields that $Z_t \geq Q_{\rho_t}^{(\delta_-)}$ for any $t\geq0$. We then get that
\[
\bbP_x(H_n>t)  = \bbP_x\big(T_0(Z)>t \big) \geq \bbP_x\big( T_0\big(Q^{(\delta_-)}\big) >\rho_t\big) \,.
\]
We then now need to show that $\rho_t$ cannot be too large.

\begin{lemma}
\label{lem:rho-lower}
There is some $\kappa>0$ such that, for any $M\geq 1$ there exists a constant $C_p=C_{x,p}$ such that, for any $t\geq 1$ and any $A>1$
\[
\bbP_x\big( \rho_t \geq A \, t^{\kappa} \big) \leq C_p A^{-p} \,.
\]
\end{lemma}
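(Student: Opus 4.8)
The plan is to control the upper deviations of $\rho_t=\int_0^t V_u\,\dd u$ by bounding the moments of $V_u$ uniformly in $u$ and then applying Markov's inequality. Recall that $V_t=Y_t/Z_t=\big(\sum_i X_t^{(i)}(\partial_i\gp(X_t))^2\big)/\gp(X_t)$. Since the specific choice of $\gp$ for the lower bound is made later in Section~\ref{sec:proof}, the argument here should only use that $\gp$ is a polynomial (homogeneous of some degree $d$) with $\gp(x)=0$ iff $x\in\mathcal A$, so that $V_t$ is itself a ratio of polynomials, homogeneous of degree $1$ in $X_t$, with a potential singularity only on $\mathcal A$. The key structural fact to extract is a pointwise bound of the form $V_t\le C\,(1+\|X_t\|)$ away from $\mathcal A$ together with an $L^p$ bound on the singular part near $\mathcal A$; more precisely I would aim to show that for every $p\ge 1$ there is $C_p$ with $\bbE_x[V_u^p]\le C_p\,(1+u)^{p}$ for all $u\ge 0$, where the $(1+u)^p$ growth is dictated by the fact that $\bbE_x[(X_u^{(i)})^p]\asymp u^p$ for a squared Bessel process (this is the source of the exponent $\kappa$: we will get $\kappa$ essentially equal to $2$ after integrating, up to the degree-$1$ homogeneity of $V$). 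Here I work under the exponential time change if convenient, but it is cleaner to stay with the original $X^{(i)}$ and estimate directly.

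The steps, in order, would be: (1) Write $V_t$ explicitly for the chosen $\gp$ and identify its homogeneity; bound it by $C\sum_i X_t^{(i)}\cdot R(X_t)$ where $R$ is a bounded-above rational function that blows up only as $X_t\to\mathcal A$. (2) Control the probability that $X$ spends time near $\mathcal A$: since $X$ is \emph{away} from $\mathcal A$ until time $H_n$ with overwhelming probability on the relevant time scales, and since the singularity of $R$ is integrable, estimate $\bbE_x\big[R(X_u)^q\big]$ for some $q>1$ uniformly in $u$ (this uses that each pair $(X^{(i)},X^{(j)})$ is a $2\delta$-dimensional squared Bessel and $\delta\in(0,1)$, so the joint density near the corner is mild enough). (3) Combine with the moment bound $\bbE_x[(X_u^{(i)})^{q'}]\le C_{q'}(1+u)^{q'}$ via Hölder to obtain $\bbE_x[V_u^p]\le C_p(1+u)^p$. (4) Integrate: by Jensen/Minkowski, $\bbE_x[\rho_t^p]=\bbE_x\big[(\int_0^t V_u\,\dd u)^p\big]\le t^{p-1}\int_0^t\bbE_x[V_u^p]\,\dd u\le C_p\,t^{p}(1+t)^{p}\le C_p' t^{2p}$ for $t\ge 1$, i.e.\ $\bbE_x[\rho_t^p]\le C_p' t^{\kappa p}$ with $\kappa=2$. (5) Apply Markov: $\bbP_x(\rho_t\ge A t^{\kappa})\le A^{-p}t^{-\kappa p}\bbE_x[\rho_t^p]\le C_p' A^{-p}$, which is exactly the claimed bound (note the statement's ``$C_p=C_{x,p}$'' and ``$M\ge1$'' are just bookkeeping — the relevant index is $p$).

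The main obstacle I anticipate is Step~(2): controlling the contribution of the times when the process is close to the forbidden set $\mathcal A$ without yet conditioning on $H_n>t$. The rational function $R$ has a genuine singularity on $\mathcal A$, and one must check its integrability against the law of $X_u$. This is where the restriction $\delta\in(0,1)$ is essential: the pair-sum $X^{(i)}+X^{(j)}$ being a squared Bessel of dimension $2\delta<2$ makes $0$ reachable, but the density $p_u(y)\asymp y^{\delta-1}$ near $y=0$ is still integrable enough (since $\delta-1>-1$) to absorb the worst inverse power of $\gp$ appearing in $V_t$, provided $\gp$ is chosen (as it will be in Section~\ref{sec:proof}) so that the order of vanishing of $\gp$ on $\mathcal A$ dominates the order of vanishing of $Y_t$ only mildly. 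If a clean pointwise bound on $R$ is not available, the fallback is to split $\rho_t=\int_0^{H_n\wedge t}V_u\,\dd u$ (since we only care about $\rho_t$ on the event $\{H_n>t\}$ where $T_0(Q^{(\delta_-)})$ is being compared, and $Z_u>0$ there), and then use that $\gp(X_u)$ is bounded below on $[0,H_n\wedge t]$ up to a polynomially small exceptional probability, recovering the moment bound with an extra harmless factor. I expect the polynomial bookkeeping in Steps (3)–(4) to be routine.
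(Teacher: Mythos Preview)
Your moment-plus-Markov strategy is sound and would succeed, but two points need correction. First, the homogeneity bookkeeping is off: if $\gp$ is homogeneous of degree $d$, then $Y=\sum_i x_i(\partial_i\gp)^2$ has degree $2d-1$ and $V=Y/\gp$ has degree $d-1$, not $1$. For the functional $\gp(x)=S^a(A-P/S)$ actually used in Section~\ref{sec:lower_bound}, one has $d=a+2$ with $a=a(\delta)\in(0,1)$, so $V_t$ scales like $u^{a+1}$ and the correct exponent is $\kappa=a+2$, not $\kappa=2$. This matters: $\kappa=2$ would give $\theta_+=2(1-\delta)$ and hence prove Guess~\ref{guess:main}, which is too strong.

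Second, and more importantly, the obstacle you flag in Step~(2) does not arise. For the $\gp$ of Section~\ref{sec:lower_bound}, the explicit computation there shows that the ratio $V_t=Y_t/Z_t$ \emph{simplifies to a polynomial}: one gets $V_t=S_t^{a-1}Q_t$ with $Q_t\le \tfrac92 S_t^2$, hence $V_t\le \tfrac92 S_t^{a+1}$ everywhere, with no singular factor $R$ blowing up on~$\mathcal A$. So Step~(2) is vacuous and Step~(3) reduces to the standard moment bound $\bbE_x[S_u^{(a+1)p}]\le C_p(1+u)^{(a+1)p}$ for the squared Bessel $S$. From there your Steps~(4)--(5) go through verbatim and yield $\kappa=a+2$. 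The paper takes a slightly different and somewhat sharper route once the pointwise bound $V_t\le C S_t^{a+1}$ is in hand: rather than moments, it uses the Bessel scaling $\int_0^t (X_u)^b\,\dd u \stackrel{(d)}{=} t^{b+1}\int_0^1(\tilde X_u)^b\,\dd u$ to reduce to $t=1$, then bounds the integral by $\sup_{[0,1]}X_u^b$ and controls the latter via an Etemadi-type maximal inequality (Lemma~\ref{lem:sup-inf}) together with the explicit squared-Bessel transition density, obtaining stretched-exponential tails~\eqref{eq:proba-integrales} instead of merely polynomial ones. Your approach buys generality (it would apply to any $\gp$ for which $V$ is polynomial-bounded, without the scaling trick); the paper's buys a stronger tail and avoids moment computations altogether.
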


\noindent
Then, with this lemma, we get that
\begin{align*}
\bbP_x(H_n>t)\geq \bbP_x\big( T_0\big(Q^{(\delta_-)}\big) >\rho_t\big) 
& \geq  \bbP_x\Big( T_0\big(Q^{(\delta_+)}\big) > t^{1/\sqrt{p}}\, t^{\kappa} \Big)  - \bbP_x\Big( \rho_t \geq t^{1/\sqrt{p}} \; t^{\kappa}\Big)\\
& \geq   c t^{-(2-\delta_+)/\sqrt{p}}\; t^{- \kappa(1-\frac12 \delta_-)} - C_p t^{-\sqrt{p}}\,,
\end{align*}
where we have again used~\eqref{eq:oneprocess} for the last inequality.
This strategy therefore shows that, for any $\eta>0$, taking $p$ usfficiently large yields that $\bbP_x(H_n>t)\geq c' t^{-\theta_+ -\eta}$ with $\theta_+ := \kappa (1-\frac12 \delta_-)$; here, $\delta_-$ is a lower bound on $D_t$ and $\kappa$ is the one from Lemma~\ref{lem:rho-lower}.
Sicne $\eta>0$ is arbitrary, this shows that $\theta_n \leq \theta_+$.

\begin{remark}
In some cases, one could in theory improve Lemmas~\ref{lem:rho-upper}-\ref{lem:rho-lower} and obtain (stretched) exponential tails for $t^{-\kappa} \rho_t$, see \textit{e.g.}~\eqref{eq:proba-integrales}: this would improve the bounds on  $\bbP_x(H_n>t)$ replacing the $t^{\eta},t^{-\eta}$ by some power of $\log t$. Since we are only interested in the persistence exponent, we do not pursue further this direction.
\end{remark}

\section{Proof of the main result}
\label{sec:proof}

This section consists in applying the strategy of Section~\ref{sec:strategy}, \textit{i.e.}\ choosing the correct functions~$\gp$ for the upper bound and for the lower bound on $\bbP_x(H_n>t)$.

\subsection{Upper bound on $\bbP_x(H_n>t)$}
\label{sec:upper_bound}

For the upper bound, let us first deal with the case $n=3$ for clarity. We turn to the general case $n\geq 3$ afterwards: the strategy is identical but with more tedious calculations.

\subsubsection{The case $n=3$}
Let us consider the functional
\begin{equation}
\label{def:A}
A_t:= X_t^{(1)} X_t^{(2)} + X_t^{(2)} X_t^{(3)} + X_t^{(3)} X_t^{(1)}  =: \gp(X_t) \,,
\end{equation}
and observe that $\gp(x_1,x_2,x_3) = x_1 x_2 +x_2 x_3 +x_3x_1=0$ if and only if $x_1=x_2=0$ or $x_2=x_3=0$ or $x_3=x_1=0$.

Then, let us derive the evolution equation of $(A_t)$, as in~\eqref{def:evolutionZ}.
Denoting $A=x_1 x_2 +x_2 x_3 +x_3x_1$ and $S=x_1+x_2+x_3$, $P=x_1x_2x_3$, straightforward calculations give that 
\[
\sum_{i=1}^3 x_i \Big(\frac{\partial \gp}{\partial x_i} \Big)^2 = AS + 3P \,,
\quad
\sum_{i=1}^3 \frac{\partial \gp}{\partial x_i} = 2 S\,,
\quad
\sum_{i=1}^3 x_i \frac{\partial^2 \gp}{\partial x_i^2} =0 \,.
\]
Hence, recalling the definitions~\eqref{def:VD} and~\eqref{def:evolutionZ}, we obtain
\[
\dd A_t = 2 \sqrt{A_t V_t}\, \dd W_t + D_t V_t \, \dd t \,,
\qquad \text{ with } \ \
V_t  = S_t + 3 \frac{P_t}{A_t} \,,\ \ \text{and}\ \
D_t  = \frac{2 \delta S_t}{S_t + 3 \frac{P_t}{A_t}} \leq 2 \delta \,,
\]
where we denoted 
\[
S_t=X_t^{(1)}+X_t^{(2)}+X_t^{(3)} \quad \text{ and } \quad P_t=X_t^{(1)}X_t^{(2)}X_t^{(3)} \,.
\]

Let us stress that Assumption \ref{hyp:H} is satisfied here and we will show that it is indeed the case in the more general case $n\geq 3$, in the next section below.
Since we have bounded $D_t \leq \delta_- := 2\delta$, in view of Section~\ref{sec:strategy}, it remains to control the time-change. Now, we will show that Lemma~\ref{lem:rho-upper} holds with $\kappa=2$.
Notice that we may simply bound $V_t\geq S_t \geq X_t^{(1)}$, so we only need to show the following: for any $p\geq 1$, there is some $C_p$ such that
\begin{equation}
\label{eq:time-change-upper}
\bbP_x\Big( \int_0^t X_u^{(1)} \dd u \leq  \gep t^{2} \Big) \leq C_p \, \gep^{p} \,.
\end{equation}
We postpone the proof of~\eqref{eq:time-change-upper} to Section~\ref{sec:time-changed}, but it allows us conclude thanks to Section~\ref{sec:strategy} that $\theta_n\geq \theta_- = \kappa (1-\frac12 \delta_+) = 2(1-\delta)$ as announced.

\subsubsection{The general case $n\geq 3$}
\label{sec:upper-general}

Define for $k \in \{1,\ldots, n\}$,
\[
\Pi_k(x) := \sum_{I\subset \{1,\ldots,n\}, |I|=k} x_I \,,\qquad \text{ with }\  x_I := \prod_{i\in I} x_i \,,
\]
and consider the process $Z_t=\Pi_{n-1}(X_t)$, which is indeed equal to $0$ if and only if $X_t^{(i)}=X_t^{(j)}=0$ for some $i\neq j$.
Then, we have that 
\[
\frac{\partial \Pi_{n-1}}{\partial x_i} = \sum_{|I|=n-2,\, i \notin I} x_I = \sum_{j\neq i} x_{\{i,j\}^c}
\]
and in particular $\sum_{i=1}^n \frac{\partial \Pi_{n-1}}{\partial x_i} =  2 \Pi_{n-2}$, where the factor $2$ comes from the fact that each pair $\{i,j\}$ appears twice in the sum. 
Of course we have that $\sum_{i=1}^n x_i \frac{\partial^2 \Pi_{n-1}}{\partial x_i^2} = 0$, so recalling~\eqref{def:VD} we end up with
\[
Y_t = \sum_{i=1}^n X_t^{(i)} \Big(\frac{\partial \Pi_{n-1}}{\partial x_i} \Big)^2(X_t), \quad V_t = \frac{Y_t}{\Pi_{n-1}(X_t)},\quad D_t = \frac{2\delta\, \Pi_{n-2}(X_t)}{V_t} \,.
\]
Let us first show that Assumption \ref{hyp:H} in verified here. First, we observe that for any $x\in(\mathbb{R}_+)^n$,
\begin{equation}\label{eq:y_pi_n}
 \sum_{i=1}^n x_i \Big(\frac{\partial \Pi_{n-1}}{\partial x_i} \Big)^2 = \sum_{i=1}^n \sumtwo{|I| =n-1}{i\in I} \sumtwo{|J|=n-2}{i\notin J} x_I x_J  =\sum_{|I| =n-1} \sum_{|J|=n-2} \vert I \cap J^c \vert  x_I x_J  .
\end{equation}
Moreover, for any $x\in(\mathbb{R}_+)^n$, it is clear that
\begin{equation}\label{eq:pin-1_pin-2}
 \Pi_{n-1}(x) \Pi_{n-2}(x) = \sum_{|I| =n-1} \sum_{|J|=n-2} x_I x_J.
\end{equation}
Since for $I$ and $J$ such that $\vert I\vert =n-1$ and $\vert I\vert =n-2$, $\vert I \cap J^c \vert \leq 2$, we deduce that a.s., for any $t \geq0$, we have $V_t \leq 2 \Pi_{n-2}(X_t) < \infty$. Next, we see from \eqref{eq:y_pi_n} that
\[
 \big\{t \geq0, \: Y_t = 0\big\} \subset \bigcup_{i=1}^n\big\{t \geq0, \: X_t^{(i)} = 0\big\}
\]
which implies that $\mathrm{Leb}(\{t \geq0, \: Y_t = 0\}) = 0$ since for any $i \in \{1, \cdots, n\}$, we classically have that $\mathrm{Leb}(\{t \geq0, \: X_t^{(i)} = 0\}) = 0$.

It remains now to estimate $V_t$. In fact we will show that $V_t \geq \Pi_{n-2}(X_t)$, which gives the bound $D_t \leq 2\delta$, so we again have $\delta_+=2\delta$.
To show this, we observe that for $I$ and $J$ such that $\vert I\vert =n-1$ and $\vert I\vert =n-2$, $\vert I \cap J^c \vert = n-1 +2 - \vert I \cup J^c\vert \geq 1$. Recalling \eqref{eq:y_pi_n} and \eqref{eq:pin-1_pin-2}, we immediately get that $V_t \geq \Pi_{n-2}(X_t)$.

To conclude that $\theta_n\geq (n-1)(1-\delta)$, we show Lemma~\ref{lem:rho-upper} with $\kappa=n-1$.
In fact since $V_t \geq \Pi_{n-2}(X_t) \geq \prod_{i=1}^{n-2} X_t^{(i)}$, we simply need to prove that for all large $p$,
\begin{equation}
\label{eq:time-change-upper2}
\bbP\Big( \int_0^t \prod_{i=1}^{n-2} X_u^{(i)} \dd u \leq  \gep t^{n-1} \Big) \leq C_p\gep^{p}. 
\end{equation}

\begin{remark}
For $1\leq k\leq n$, the first time that $\Pi_{n-k+1}(X_t)$ hits $0$ is the first simultaneous return to zero of $k$ independent Bessel processes, \textit{i.e.}\ the time $(X_t)_{t\geq 0}$ hits the $(n-k)$-dimensional set $\mathcal{A}^{(k)} = \bigcup_{|I|=k} \{x_i=0 \, \forall i\in I\}$, see Remark~\ref{rem:k}. Using the same strategy as above, one can show that the associated persistence exponent $\theta^{(k)}_n$ is larger or equal than $(n-k+1)(1-k \delta/2)$. 
\end{remark}

\subsection{Lower bound on $\bbP_x(H_3>t)$}\label{sec:lower_bound}

As far as the lower bound is concerned, a natural choice of functional would be $Z_t := \gp(X_t)$ with $\gp(x) = (x_1+x_2)(x_2+x_3)(x_3+x_1)$, which is such that $\gp(x)=0$ if and only if $x_i+x_j=0$ for some $i\neq j$.
One can then proceed with the calculations and find that $\kappa=3$ and $\delta_- =2\delta$ so that it gives a bound $\theta_+=3(1-\delta)$.

We are going to give some slightly more optimized functional to improve the upper bound.
Recall the definitions of the functionals $A_t$, $S_t$ and $P_t$ from the previous section.
Then, we define
\[
Z_t := S_t^{a} \Big( A_t - \frac{P_t}{S_t} \Big) =: \gp(X_t)\,.
\]
where $a \in [0,1]$ is some fixed exponent (that depends on $\delta$), to be optimized later on.
Note that for $a=1$, one recovers the functional $\gp(x) = (x_1+x_2)(x_2+x_3)(x_3+x_1)$.
Let us stress however that in the case $a\in (0,1)$, the derivatives of the function $\gp$ are singular at the point $(0,0,0)$ and therefore the It\^o formula is not valid on the time-interval $\mathbb{R}_+$. We will see how we can still apply the strategy outlined in Section \ref{sec:strategy}, but let us first compute the processes $Y$ and $V$ and check that Assumption~\ref{hyp:H} still holds in the present case.

A delicate calculation gives the following (we refer to Appendix~\ref{app:details_calculs} for more details): 
\[
Y :=\sum_{i=1}^3 x_i \Big(\frac{\partial \gp}{\partial x_i} \Big)^2  = S^{2a-1} \Big(A- \frac{P}{S} \Big) \Big( S^2 + a(a+4)A + (1-a)(a+5) \frac{P}{S}   \Big) \,.
\]
It is again clear that we have
$\big\{t \geq0, \: Y_t = 0 \big\} \subset \bigcup_{i=1}^3\big\{t \geq0, \: X_t^{(i)} = 0 \big\}$
so we also have here that a.s.\ $\mathrm{Leb}(\{t \geq0, \: Y_t = 0 \}) = 0$. Recalling now the definition~\eqref{def:VD} of $V$, we find that
\[
 V_t=  S_t^{a-1} Q_t \,, \qquad Q_t:=  S_t^2 + a(a+4)A_t + (1-a)(a+5) \frac{P_t}{S_t} \,.
\]
Notice that $A_t \leq \frac12 S_t^2$ and $P_t \leq \frac16 S_t^3$ so that $Q_t\leq\frac{9}{2} S_t^{2} $ and finally $V_t \leq \frac{9}{2}S_t^{a+1} < \infty$. This shows that Assumption \ref{hyp:H} holds. Let us now compute the process $D$. Some straightforward (but tedious) calculations give the following (again, see Appendix \ref{app:details_calculs} for details):
\[
\begin{split}
D^{(1)} &:= \sum_{i=1}^3 \frac{\partial \gp}{\partial x_i} =  S^{a-1} \Big( 2S^2 + (3a-1) A + 3(1-a) \frac{P}{S} \Big) \,, \\
D^{(2)} & := \sum_{i=1}^3 x_i \frac{\partial^2 \gp}{\partial x_i^2} = S^{a-1} \Big(  a(a+3) A + (1-a)(a+4) \frac{P}{S} \Big)  \,.
\end{split}
\]
Recalling that $D_t = (\delta D_t^{(1)} + +2 D_t^{(2)}) / V_t$, we get that
\[
 D_t= \frac{1}{Q_t} \Big( 2\delta S_t^2 + (\delta (3a-1) + 2a(a+3)) A_t + (1-a)(3\delta + 2(a+4)) \frac{P_t}{S_t} \Big)\,.
\]
Now, we can write $D_t= 2\delta + \frac{1}{Q_t}   \big( f_1(a,\delta)  A_t + f_2(a,\delta) \frac{P_t}{S_t} \big)$, with
\[
f_1(a,\delta)  = 2(1-\delta) a^2 +(6-5\delta)a -\delta\,,\qquad 
f_2(a,\delta) = -2(1-\delta) a^2 - (6-5\delta) a +8-7\delta \,.
\]
We now choose $a=a(\delta)$ such that $f_1(a,\delta)=0$, that is
\[
a(\delta) := \frac{\sqrt{(6-5\delta)^2 +8\delta(1-\delta)} -(6-5\delta)}{4(1-\delta)} \,.
\]
With this choice, we have that $f_2(a,\delta) = 8(1-\delta) \geq 0$, so in particular $D_t \geq 2\delta =: \delta_-$.

Let us now explain how we can apply our strategy even though the function $\gp$ is not $C^2$ on $(\mathbb{R}_+)^3$. If we set $T_0(S) = \inf\{t > 0, \: S_t = 0\}$, we can apply the Itô formula up until this time and the evolution equation \eqref{def:evolutionZ} remains valid on $[0, T_0(S))$: almost surely, for any $t \in [0, T_S)$,
\[
 Z_t = Z_0 + 2\int_0^t\sqrt{Z_s V_s} \dd W_s + \int_0^t D_s V_s \dd s \, .
\]
On the other hand, the time-change $\rho_t = \int_0^t V_s \dd s$ is always well-defined, and, denoting by $\tau$ its inverse, the process $\mathcal{B}_t = \int_0^{\tau_t}\sqrt{V_s} \dd W_s$ is a $(\mathcal{F}_{\tau_t})_{t\geq0}$-Brownian motion. Finally, remembering that $K_t = Z_{\tau_t}$, we get that for any $t \in [0, \rho_{T_0(S)})$,
\[
 K_t = K_0 + 2\int_0^t\sqrt{K_s}\dd \mathcal{B}_s + \int_0^{t}D_{\tau_s} \dd s \, .
\]
Let $Q^{(\delta_-)}$ be the process defined by
\[
 \dd Q^{(\delta_-)}_t = 2 \sqrt{Q^{(\delta_-)}_t} \dd \mathcal{B}_t + \delta_- \dd t \,.
\]
Then, by comparison, we get that a.s. for any $t\in[0,\rho_{T_0(S)})$, $K_t \geq Q_t^{(\delta_-)}$. Since $H_3 \leq T_0(S)$ and since $\rho_{H_3}$ is the first hitting of zero of $K$, it is clear that for any $t \geq0$,
\[
 \{T_0(Q_{(\delta_-)}) > \rho_t\} \subset \{\rho_{H_3} > \rho_t\} = \{H_3 > t\}
\]
and therefore $\mathbb{P}_x(H_3 > t) \geq \mathbb{P}_x(T_0(Q_{(\delta_-)}) > \rho_t)$ as in Section \ref{sec:strategy}.

Then, we need to control the time-change, and we will prove that Lemma~\ref{lem:rho-lower} holds with $\kappa = 2+a$.
For this, remember that $V_t \leq \frac{9}{2} S_t^{a+1}$ and therefore $V_t \leq \frac{9}{2} \, 3^{a+1} \,((X_t^{(1)})^{a+1} +(X_t^{(2)})^{a+1} +(X_t^{(3)})^{a+1})$.
All together, using also a union bound, Lemma~\ref{lem:rho-lower} with $\kappa = 2+a$ follows if we show that for any $b = a+1>0$, for any large $p$
\begin{equation}
\label{eq:time-change-lower}
\bbP_x\Big( \int_0^t (X_u^{(1)})^{b} \dd u \geq  A t^{b+1} \Big) \leq C_p \, A^{-p}  \,.
\end{equation}
Again, we postpone the proof of~\eqref{eq:time-change-lower} to Section~\ref{sec:time-changed}, but we can now conclude thanks to Section~\ref{sec:strategy} that $\theta_n\leq \theta_+ := \kappa (1-\frac12 \delta_+) = 2(1-\delta) +f(\delta)$ with $f(\delta) = a(\delta) (1-\delta)$, as stated in Theorem~\ref{thm:main}.

\begin{remark}
\label{rem:upper-better}
We could try to optimize further the functional $\gp$, for instance considering
$\tilde Z_t := S_t^a ( A_t^{b} - c P_t S_t^{2 b-3})$, for some constants $a,b,c$ to be optimized over (the exponent $2b-3$ ensures that the functional is of homogeneity $a+2b$).
We have used Matematica to help us with the calculations of $V_t,D_t$, and guess a lower bound on $D_t$: it seems that, optimizing over $a,b,c$, one would obtain the following upper bound on the decay exponent:
\[
\theta_+ = 2(1-\delta) + \frac14 \Big(\sqrt{(6-5\delta)^2 +\frac{144\, \delta^2(1-\delta) }{16+9 \delta} } -(6-5\delta) \Big)
\]
However, the calculations are very intricate and the effort seems excessive compared to the improvement of the bound from Theorem~\ref{thm:main} --- we have here $\sup_{\delta\in [0,1]} [ \theta_+ -2(1-\delta)] \approx 0.048$.
\end{remark}

\subsection{Control of the time-change processes: proof of~\eqref{eq:time-change-upper}-\eqref{eq:time-change-upper2} and \eqref{eq:time-change-lower}}
\label{sec:time-changed}

We first show~\eqref{eq:time-change-upper} and \eqref{eq:time-change-lower} before we turn to \eqref{eq:time-change-upper2}, which is an improvement of~\eqref{eq:time-change-upper}.
Such bounds should be classical, but we were not able to find references, so we prove them by elementary (and robust) methods.

\subsubsection{Proof of~\eqref{eq:time-change-upper} and \eqref{eq:time-change-lower}}
\label{sec:chgt-temps}

Let us denote $X_t :=X_t^{(1)}$ for simplicity.
First of all, notice that by scale invariance, we have that, for any $b\geq 0$,
\[
\int_0^t (X_u)^b \dd u \stackrel{(d)}{=} t^{b+1} \int_0^1 (X_u)^b \dd u 
\]
(with a different starting point).
Therefore, taking $b=1$ in~\eqref{eq:time-change-upper} and $b=1+a$ in~\eqref{eq:time-change-lower}, it is enough show that for any $x\in [0,1]$,
\begin{equation*}
\bbP_{x}\Big(\int_0^1 (X_u)^b \dd u  \leq \gep \Big) \leq  C_p \gep^p \,,\qquad 
\bbP_x\Big(\int_0^1 (X_u)^b \dd u  \geq A \Big) \leq C_p A^{-p} \,.
\end{equation*}
In fact, we will show much stronger bounds: we show that there is some $\gamma = \gamma_{b}>0$ and some constant $c>0$ such that
\begin{equation}
\label{eq:proba-integrales}
\bbP_{x}\Big(\int_0^1 (X_u)^b \dd u  \leq \gep \Big) \leq  e^{- c \gep^{-\gamma}} \,,\qquad 
\bbP_x\Big(\int_0^1 (X_u)^b \dd u  \geq A \Big) \leq e^{-c A^{\gamma}} \,.
\end{equation}

Before we prove this, let us show a simple technical lemma that controls the supremum and infimum of a continuous Markov process $(Y_s)_{s\geq 0}$.
The content and the proof of this Lemma are inspired by Etemadi's maximal inequality, see e.g.\ \cite[Thm.~22.5]{Billingsley}.

\begin{lemma}
\label{lem:sup-inf}
Let $(Y_s)_{s\geq 0}$ be a continuous (time-homogeneous) Markov process. Let $A>0$.
Then,  for any $x \in [0,A/2]$,
\[
\bbP_x\big( \sup_{s\in [0,t]} Y_s > A \big)
\leq \bbP_x\big(  Y_t > A/2 \big) + \sup_{s\in [0,t]} \bbP_A\big( Y_s \leq A/2 \big) \,.
\] 
Also, for any $x \geq 4A$,
\[
\bbP_x\big( \inf_{s\in [0,t]} Y_s \leq A \big)
\leq \bbP_x\big(  Y_t \leq x/2 \big) + \sup_{s\in [0,t]} \bbP_A\big( Y_s > x/2 \big) \,.
\] 
\end{lemma}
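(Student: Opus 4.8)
The plan is to establish the two inequalities by the same Etemadi-type splitting argument, so I describe the first (supremum) bound in detail and then indicate the symmetric modifications for the second. Fix $A>0$ and $x\in[0,A/2]$, and let $\sigma:=\inf\{s\in[0,t]:Y_s>A\}$ be the first time the process exceeds level $A$ (with $\sigma=+\infty$ if this never happens on $[0,t]$), so that $\{\sup_{s\in[0,t]}Y_s>A\}=\{\sigma\le t\}$. On the event $\{\sigma\le t\}$, by continuity of the paths we have $Y_\sigma=A$. The idea is to decompose according to whether, once the process has reached level $A$ at time $\sigma$, it is still above $A/2$ at the final time $t$ or has dropped below $A/2$:
\[
\bbP_x\big(\sigma\le t\big)=\bbP_x\big(\sigma\le t,\ Y_t>A/2\big)+\bbP_x\big(\sigma\le t,\ Y_t\le A/2\big).
\]
The first term is bounded by $\bbP_x(Y_t>A/2)$ simply by dropping the event $\{\sigma\le t\}$. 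For the second term, I would apply the strong Markov property at the stopping time $\sigma$: on $\{\sigma\le t\}$ the process restarts from $Y_\sigma=A$, and the event $\{Y_t\le A/2\}$ becomes, writing $s:=t-\sigma\in[0,t]$, the event that a copy of the process started from $A$ is at most $A/2$ after time $s$. Hence
\[
\bbP_x\big(\sigma\le t,\ Y_t\le A/2\big)=\bbE_x\Big[\ind_{\{\sigma\le t\}}\,\bbP_A\big(Y_{t-\sigma}\le A/2\big)\Big]\le \sup_{s\in[0,t]}\bbP_A\big(Y_s\le A/2\big),
\]
since $t-\sigma\in[0,t]$ on $\{\sigma\le t\}$. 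Adding the two bounds gives the claimed inequality.

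For the infimum bound, fix $A>0$ and $x\ge 4A$, and set $\sigma:=\inf\{s\in[0,t]:Y_s\le A\}$, so that $\{\inf_{s\in[0,t]}Y_s\le A\}=\{\sigma\le t\}$, and on this event $Y_\sigma=A$ by path-continuity (the process, starting from $x\ge 4A>A$, must cross the level $A$ continuously). I split according to the position of $Y_t$ relative to the intermediate level $x/2$:
\[
\bbP_x\big(\sigma\le t\big)=\bbP_x\big(\sigma\le t,\ Y_t\le x/2\big)+\bbP_x\big(\sigma\le t,\ Y_t>x/2\big)\le \bbP_x\big(Y_t\le x/2\big)+\bbP_x\big(\sigma\le t,\ Y_t>x/2\big),
\]
and then apply the strong Markov property at $\sigma$ to the second term: from $Y_\sigma=A$, the event $\{Y_t>x/2\}$ is $\{Y_{t-\sigma}>x/2\}$ for a fresh copy started at $A$, so it is at most $\sup_{s\in[0,t]}\bbP_A(Y_s>x/2)$. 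This yields exactly the stated inequality. (Note that $x/2\ge 2A>A$ is what makes the level $x/2$ lie strictly above $A$, so that the two events in the split are genuinely the relevant "typical" and "atypical" scenarios — the process started at $A$ should not jump up to $x/2$ in a bounded time, and the process started at $x$ should not drop to $x/2$.)

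The only real point requiring care — and the step I expect to be the main obstacle to write cleanly — is the justification that $Y_\sigma$ equals exactly the crossed level (so that the restarted process has a deterministic, known starting point $A$), and the measurability/strong-Markov bookkeeping: one must check that $\sigma$ is a stopping time for the (completed) filtration, that $\sigma\le t$ on the relevant events, and that $t-\sigma$ is a legitimate $[0,t]$-valued random time to plug into the supremum. All of this is standard for a continuous time-homogeneous Markov process with right-continuous complete filtration, but it is worth stating the strong Markov property is being applied at $\sigma\wedge t$ and using $\bbE_x[\ind_{\{\sigma\le t\}}g(\sigma,Y_\sigma)]=\bbE_x[\ind_{\{\sigma\le t\}}\,\bbE_{Y_\sigma}[\,\cdot\,]\big|_{\text{evaluated at time }t-\sigma}]$ with $Y_\sigma\equiv A$ on $\{\sigma\le t\}$. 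Everything else is an elementary union of two events and monotonicity of probability.
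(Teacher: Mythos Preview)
Your proof is correct and follows essentially the same approach as the paper's: decompose according to the position of $Y_t$ relative to the intermediate level ($A/2$ in the first case, $x/2$ in the second), bound the ``stayed above/below'' piece trivially, and control the ``crossed back'' piece via the strong Markov property at the first crossing time. The only cosmetic difference is that the paper uses the hitting time $\tau_A=\inf\{s:Y_s=A\}$ whereas you use the exceedance time $\sigma=\inf\{s:Y_s>A\}$; for a continuous process starting on the appropriate side of $A$ these coincide (and both give $Y_\sigma=A$), so the arguments are the same.
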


\begin{proof}
Let us start with the first inequality.
Let $\tau_{A}:= \inf\{s, Y_s=A\}$ and write
\[
\bbP_x\big( \sup_{s\in [0,t]} Y_s > A \big) \leq \bbP_x\big( Y_t > A/2 \big) + \bbP_x\big( \tau_A<t, Y_t \leq A/2\big) \,.
\]
Then, applying the (strong) Markov property at time $\tau_A$, on the event $\{\tau_A<t\}$ we have that $\bbP_x( Y_t \leq A/2 \mid \cF_{\tau_A}) = \gp_t(\tau_A)$ with $\gp_t(s):= \bbP_A(Y_{t-s} \leq A/2)$.
We therefore obtain
\[
\bbP_x\big( \tau_A<t, Y_t \leq A/2\big) = \bbE_x \big[ \ind_{\{\tau_A<t\}} \gp_t(\tau_A) \big] \leq \sup_{s\in [0,t]} \gp_t(s) \,,
\]
which gives the first inequality.

For the second inequality, we use a similar reasoning: we write
\[
\bbP_x\big( \inf_{s\in [0,t]} Y_s < A \big) \leq \bbP_x\big( Y_t < x/2 \big) + \bbP_x\big( \tau_A<t, Y_t \geq  x/2\big) \,.
\]
Then, applying the (strong) Markov property at time $\tau_A$, on the event $\tau_A<t$ we have that $\bbP_x( Y_t \geq x/2 \mid \cF_{\tau_A}) = \tilde \gp_t(\tau_A)$ with $\tilde \gp_t(s):= \bbP_A(Y_{t-s} \geq x/2)$.
We therefore obtain
\[
\bbP_x\big( \tau_A<t, Y_t \geq x/2\big) = \bbE_x \big[ \ind_{\{\tau_A<t\}} \tilde \gp_t(\tau_A) \big] \leq \sup_{s\in [0,t]} \tilde \gp_t(s) \,,
\]
which gives the second inequality.
\end{proof}

Let us now prove the second inequality in~\eqref{eq:proba-integrales}, which is the simpler of the two.
We have
\[
\bbP_x\Big(\int_0^1 (X_u)^b \dd u  \geq A \Big) \leq \bbP_x\Big( \sup_{u\in [0,1]} X_u  \geq A^{1/b} \Big) \leq \bbP_x\Big( X_u  \geq  \tfrac12  A^{1/b} \Big) + \sup_{u\in [0,1]} \bbP_{A^{1/b}}\Big( X_u \geq \tfrac12 A^{1/b} \Big)\,,
\]
where we have used Lemma~\ref{lem:sup-inf} for the last inequality.
Now, recall that $(X_t)_{t\geq 0}$ is a squared Bessel process of dimension $\delta$, so that we have
\begin{equation}
\label{eq:tail-Bessel}
\bbP_x\Big( X_u  \geq \tfrac12 A^{1/b} \Big) \leq  e^{- c A^{1/b}} \,, 
\qquad \bbP_{A^{1/b}}\Big( X_u \leq  \tfrac12 A^{1/b} \Big) \leq e^{- c u^{-1} A^{1/b} } \leq e^{- c A^{1/b} } \,.
\end{equation}
Indeed, these bounds can be easily deduced from the expression of the transition density of squared Bessel processes, see for instance~\cite[Ch. XI, Cor.~1.4]{RY99}. 
This concludes the proof of the second part of~\eqref{eq:proba-integrales}.

\smallskip
We now turn to the first inequality in~\eqref{eq:proba-integrales}.
We let $\gamma = \frac{1}{(4b+4)} <1$, and we write 
\begin{equation}
\label{eq:sup-integrale}
\bbP_x\Big(\int_0^1 (X_u)^b \dd u  \leq \gep \Big) \leq \bbP_x\Big( \sup_{u\in [0,1]} S_u  \leq \gep^{\gamma} \Big)  + \bbP_x\Big( \int_0^1 (X_u)^b \dd u  \leq \gep \,,\, \sup_{u\in [0,1]} X_u  \geq \gep^{\gamma} \Big)\,.
\end{equation}

For the first term in~\eqref{eq:sup-integrale}, we use a rough bound: we use the Markov property at every time $(i-1)\gep^{\gamma}$ for $i\in \{1,\ldots, \gep^{-\gamma}\}$, 
\begin{equation}
\label{eq:supX}
\bbP_x\Big( \sup_{u\in [0,1]} X_u  \leq \gep^{\gamma} \Big) \leq \bigg( \sup_{x\in [0,\gep^{\gamma}]} \bbP_{x}\Big( \sup_{u\in [0,\gep^{\gamma}]} X_u \leq \gep^{\gamma} \Big)  \bigg)^{\gep^{-\gamma}} \leq e^{-c \,\gep^{-\gamma}} \,,
\end{equation}
where for the second inequality we have used that, by scale invariance and comparison, 
\[
\sup_{x\in [0,\gep^{\gamma}]}\bbP_{x}\Big( \sup_{u\in [0,\gep^{\gamma}]} X_u \leq \gep^{\gamma} \Big) = \bbP_{0}\Big( \sup_{u\in [0,1]} X_u \leq 1 \Big) =: e^{-c} \,.\]

Let us now control the second term in~\eqref{eq:sup-integrale}. We set $r:=\frac{2+b}{2+2b}$ and we decompose the probability according to the interval of the form $[(i-1) \gep^{r}, i\gep^{r}]$ where the supremum is attained, on which the infimum also need to be smaller than $\gep^{(1-r)/b}$ (otherwise the integral on this interval would be larger than $\gep$). 
Then, the last term of~\eqref{eq:sup-integrale} is upper bounded by:
\begin{multline*}
\bbP_x\Big( \exists \, i\in \{1,\ldots, \gep^{-r}\} \,, \sup_{u\in [(i-1) \gep^{r}, i\gep^{r}]} X_u  \geq \gep^{\gamma} \,, \inf_{[(i-1) \gep^{r}, i\gep^{r}]} X_u \leq \gep^{(1-r)/b} \Big) \\
 \leq \gep^{-r} \sup_{x\in \bbR_+} \bbP_x\Big( \sup_{u\in [0, \gep^{r}]} X_u  \geq \gep^{\gamma} \,, \inf_{[0, \gep^{r}]} X_u \leq \gep^{2\gamma}  \Big)\,,
\end{multline*}
having used subadditivity and the fact that $(1-r)/b=2\gamma$ for the last inequality (recall that $\gamma:= 1/(4+4b)$).

We then consider two cases for the last probability: either $x \leq  4\gep^{2\gamma}$, or $x\geq 4\gep^{2\gamma}$.
In the case where $x \leq 4\gep^{2 \gamma}$, by comparison and scaling, we bound the probability by
\[
\bbP_{4\gep^{2\gamma}}\Big( \sup_{u\in [0, \gep^{r}]} X_u  \geq \gep^{\gamma}   \Big) = \bbP_4\Big( \sup_{u\in [0,\gep^{r-2\gamma}]} X_u  \geq \gep^{-\gamma}  \Big).
\]
Now from our choices of $\gamma,r$ we have  $r-2\gamma= \frac12 >0$, so $\gep^{r-2\gamma}\leq 1$.  From that we bound the above probability by
\[
\bbP_4\Big( \sup_{u\in [0,1]} X_u  \geq \gep^{-\gamma}  \Big)  \leq \bbP_4 \Big( X_1 > \tfrac12 \gep^{-\gamma}\Big) + \sup_{u\in[0,1]} \bbP_{\gep^{-\gamma}} \Big( X_u < \tfrac12 \gep^{-\gamma}\Big) \,,
\]
using Lemma~\ref{lem:sup-inf}.
Using the fact that $(X_t)_{t\geq 0}$ is a squared Bessel process, we conclude analogously to~\eqref{eq:tail-Bessel} that both probabilities are bounded by $\exp(- c\, \gep^{-\gamma})$.

In the case where $x \geq  2\gep^{2\gamma}$, by comparison and scaling, we bound the probability by
\begin{align*}
\bbP_{4\gep^{2\gamma}}\Big( \inf_{u\in [0, \gep^{r}]} X_u  \leq \gep^{2\gamma}   \Big)  & = \bbP_{4\gep^{-1/2}} \Big( \inf_{u\in [0, 1]} X_u  \leq \gep^{-1/2}   \Big) \\
& \leq \bbP_{4\gep^{-1/2}} \Big( X_1  \leq  2 \gep^{-1/2} \Big) + \sup_{u\in[0,1]} \bbP_{\gep^{-1/2}} \Big( X_u >  2 \gep^{-1/2}\Big) \,,
\end{align*}
where we have also used the fact that  $2\gamma-r= -\frac12$; the second inequality comes from Lemma~\ref{lem:sup-inf}.
Again, analogously to~\eqref{eq:tail-Bessel}, both probabilities are bounded by $\exp(- c\, \gep^{-1/2})$.
This concludes the proof of the second part of~\eqref{eq:proba-integrales}.
\qed

\subsubsection{Proof of~\eqref{eq:time-change-upper2}}

To simplify notation, we let $m=n-2$ and we denote
\[
P_t^{(m)} := \prod_{i=1}^{m} X_t^{(i)} \,.
\]
First of all, note that, by scaling and comparison, we have that
\[
\bbP_x\Big( \int_0^t P_t^{(m)} \dd u \leq  \gep t^{m+1} \Big) \leq \bbP_0 \Big( \int_0^1 P_t^{(m)} \dd u \leq  \gep \Big) \,.
\]
We now show the following lemma, of which point~\textit{(3)} is exactly~\eqref{eq:time-change-upper2}.

\begin{lemma}\label{lem:tech-estimates}
Let $m\geq 1$ be a fixed integer and let $ P_t^{(m)} := \prod_{i=1}^{m} X_t^{(i)}$.
Then, for all $p\geq 1$, we have that there is a constant $C_p=C_{p,m}>0$ such that:
\begin{itemize}
\item[(1)]\quad $\displaystyle\bbE_0\bigg [ \Big ( \sup_{0\leq s <t\leq 1} \frac{\vert P_t^{(m)}-P_s^{(m)} \vert }{(t-s)^{1/4}}\Big)^p\bigg] \leq C_{p} $;\\[5pt]
\item[(2)]\quad $\displaystyle \bbP_0\big( \sup_{s\in[0,1]} P_s^{(m)} \leq \gep \big) \leq C_p\gep^p $;\\[5pt]
\item[(3)]\quad $\displaystyle \bbP_0 \Big ( \int_0^1 P_s^{(m)} \dr s \leq \gep \Big)\leq C_p \, \gep^p$.
\end{itemize}
\end{lemma}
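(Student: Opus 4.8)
The plan is to prove the three estimates in order, with each one feeding into the next. For point~(1), I would use a Kolmogorov-type moment bound. The key observation is that each $X_t^{(i)}$ is a squared Bessel process of dimension $\delta$, hence can be written (via the additivity property, or directly) in terms of Brownian motion and has all moments finite, with $\mathbb{E}_0[\sup_{s\le 1}(X_s^{(i)})^q]<\infty$ for every $q\ge 1$; moreover $\mathbb{E}_0[|X_t^{(i)}-X_s^{(i)}|^q]\le C_q(t-s)^{q/2}$ for $0\le s<t\le 1$, which follows from the SDE \eqref{def:Bessels} together with the Burkholder--Davis--Gundy inequality applied to $\int_s^t 2\sqrt{X_u^{(i)}}\,\dd W_u^{(i)}$ (the martingale part contributes $(t-s)^{q/2}$ after bounding $\sup X_u^{(i)}$ by Hölder, the drift contributes $(t-s)^q$). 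Writing $P_t^{(m)}-P_s^{(m)}$ as a telescoping sum $\sum_{k=1}^m \big(\prod_{i<k}X_t^{(i)}\big)(X_t^{(k)}-X_s^{(k)})\big(\prod_{i>k}X_s^{(i)}\big)$ and applying Hölder's inequality with the independence of the coordinates, one gets $\mathbb{E}_0[|P_t^{(m)}-P_s^{(m)}|^q]\le C_{q,m}(t-s)^{q/2}$ for all $q\ge 1$. The Kolmogorov continuity theorem (in its quantitative form, e.g.\ the Garsia--Rodemich--Rumsey inequality) then upgrades this to a moment bound on the $\frac14$-Hölder seminorm: taking $q$ large enough that $q/2 - 1 > q/4$, the modulus $\sup_{s<t}|P_t^{(m)}-P_s^{(m)}|/(t-s)^{1/4}$ has finite $p$-th moment for every $p$, which is~(1).

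For point~(2), I would combine~(1) with an anti-concentration (small-ball lower bound) estimate for $P_s^{(m)}$ at a fixed time. If $\sup_{s\in[0,1]}P_s^{(m)}\le\gep$, then in particular $P_{1}^{(m)}\le\gep$ (or one can use any fixed time, say $s=1$); but $P_1^{(m)}=\prod_{i=1}^m X_1^{(i)}$ is a product of $m$ independent random variables, each of which is the value at time~$1$ of a squared Bessel process of dimension $\delta\in(0,1)$ started from~$0$, i.e.\ (up to scaling) a Gamma$(\delta/2,\cdot)$ variable. Hence $\mathbb{P}_0(X_1^{(i)}\le u)\le C u^{\delta/2}$ for small $u$, and by independence $\mathbb{P}_0(P_1^{(m)}\le\gep)\le \mathbb{P}_0(\exists i:\, X_1^{(i)}\le \gep^{1/m})\le m C\gep^{\delta/(2m)}$; more carefully, $\mathbb{P}_0(P_1^{(m)}\le\gep)\le C_k\gep^{\delta/2}(\log(1/\gep))^{m-1}$ up to polylog factors, which is certainly $\le C_p\gep^p$ is \emph{false} — so instead I should not reduce to a single time. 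The correct route is: by~(1), $P^{(m)}$ is $\tfrac14$-Hölder with good moments, so $\{\sup_{s\in[0,1]}P_s^{(m)}\le\gep\}$ forces $P_s^{(m)}\le\gep$ on a whole interval; but actually the cleanest argument is to bound $\mathbb{P}_0(\sup_{s\in[0,1]}P_s^{(m)}\le\gep)\le\mathbb{P}_0(\sup_{s\in[0,1]}X_s^{(1)}\le\gep^{1/m}\text{ or }\dots)$ — no. Let me instead use: $\{\sup_{s\le 1}P_s^{(m)}\le \gep\}\subset\bigcap_i\{\inf_{s\in[1/2,1]}X_s^{(i)}\le \gep^{1/m}\}$ is also not quite independent. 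The honest approach for~(2) is to note $\sup_{s\le 1}P_s^{(m)}\ge \frac12\sup_{s\le 1}\prod_i X_s^{(i)}$ and use that with probability $\ge c>0$ each $X^{(i)}$ exceeds~$1$ somewhere on a \emph{common} deterministic subinterval; more efficiently, iterate on $\lfloor\gep^{-\gamma}\rfloor$ subintervals exactly as in \eqref{eq:supX}: $\mathbb{P}_0(\sup_{s\le 1}P_s^{(m)}\le\gep)\le\big(\sup_x\mathbb{P}_x(\sup_{s\le\gep^\gamma}P_s^{(m)}\le\gep)\big)^{\gep^{-\gamma}}$ and for the one-step probability, started from \emph{any} point, running each $X^{(i)}$ for time $\gep^\gamma$, by scaling $\mathbb{P}_0(\sup_{s\le\gep^\gamma}X_s^{(i)}\le\gep^{1/m})=\mathbb{P}_0(\sup_{s\le 1}X_s^{(i)}\le\gep^{1/m-\gamma})$; choosing $\gamma<1/m$ makes this a small-ball probability bounded by $\exp(-c\gep^{-(1/m-\gamma)\cdot(1-\delta/2)})$ via the Bessel small-ball estimate, and by independence the one-step probability is at most that to the $m$-th power, giving a stretched-exponential bound on $\mathbb{P}_0(\sup_{s\le1}P_s^{(m)}\le\gep)$, which is far stronger than $C_p\gep^p$.

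For point~(3), I would run the same split as in the proof of~\eqref{eq:time-change-upper}: write
\[
\bbP_0\Big(\int_0^1 P_s^{(m)}\,\dd s\le\gep\Big)\le\bbP_0\Big(\sup_{s\in[0,1]}P_s^{(m)}\le\gep^{\gamma'}\Big)+\bbP_0\Big(\int_0^1 P_s^{(m)}\,\dd s\le\gep\,,\ \sup_{s\in[0,1]}P_s^{(m)}\ge\gep^{\gamma'}\Big)
\]
for a suitable small exponent $\gamma'>0$. The first term is controlled by~(2). For the second term, on the event that the supremum of $P^{(m)}$ exceeds $\gep^{\gamma'}$ but the integral is at most~$\gep$, the process $P^{(m)}$ must drop from level $\gep^{\gamma'}$ down to level $O(\gep^{1-\gamma'})$ within a short time window (so that the time spent near its maximum is $\lesssim \gep^{1-2\gamma'}$, small), i.e.\ there is an interval of length $\le C\gep^{1-2\gamma'}$ across which the oscillation of $P^{(m)}$ is at least $\frac12\gep^{\gamma'}$; but by~(1) and Markov's inequality, the probability of such an oscillation over an interval of length $\ell$ is at most $C_p(\ell^{1/4}/\gep^{\gamma'})^{p}$, and summing over the $\lesssim\gep^{-(1-2\gamma')}$ candidate intervals gives a bound $\le C_p'\,\gep^{-(1-2\gamma')}\big(\gep^{(1-2\gamma')/4}/\gep^{\gamma'}\big)^{p}$, which is $\le C_p''\gep^{p'}$ for $p'$ arbitrarily large once $\gamma'$ is chosen small enough and then $p$ large enough. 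Combining the two terms yields~(3), hence~\eqref{eq:time-change-upper2}.

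The main obstacle is point~(2): the naive reduction to a single fixed time only gives a polylogarithmically corrected power $\gep^{\delta/2}$, which is \emph{not} enough to get an arbitrarily large power $\gep^p$. The fix — running the iteration over $\gep^{-\gamma}$ short subintervals and using the Bessel small-ball estimate on each, exploiting independence of the $m$ coordinates — is what actually delivers a (stretched) exponential bound, and it is the step that must be set up carefully (in particular choosing $\gamma<1/m$ so the per-step small-ball exponent stays positive). Everything else is a routine combination of BDG, Kolmogorov/Garsia--Rodemich--Rumsey, and the explicit Bessel transition-density tail bounds already invoked in~\eqref{eq:tail-Bessel}.
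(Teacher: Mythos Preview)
Your treatment of~(1) and~(3) is essentially correct and close to the paper's; for~(1) the paper applies It\^o's formula directly to $P_t^{(m)}$ (obtaining \eqref{eq:ito_produit}) rather than telescoping, and for~(3) it uses the H\"older bound from~(1) in one shot around the random argmax of $P^{(m)}$ rather than summing over subintervals, but these are cosmetic differences.

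The genuine gap is in your argument for~(2). Your claim that ``by independence the one-step probability is at most that to the $m$-th power'' would require the inclusion
\[
\big\{\sup_{s\le\varepsilon^\gamma} P_s^{(m)}\le\varepsilon\big\}\subset\bigcap_{i=1}^m\big\{\sup_{s\le\varepsilon^\gamma} X_s^{(i)}\le\varepsilon^{1/m}\big\},
\]
which is \emph{false}: if the $X^{(i)}$ take turns being near zero on disjoint subintervals, then $P^{(m)}$ stays small throughout while each $\sup_s X_s^{(i)}$ is large. Your iteration can be repaired by working at a \emph{fixed} time instead: since $\{\sup_{s\le\varepsilon^\gamma}P_s^{(m)}\le\varepsilon\}\subset\{P_{\varepsilon^\gamma}^{(m)}\le\varepsilon\}\subset\bigcup_i\{X_{\varepsilon^\gamma}^{(i)}\le\varepsilon^{1/m}\}$, a union bound (together with comparison to the starting point $0$ and scaling) gives the one-step estimate
\[
\sup_x\bbP_x\big(\sup_{s\le\varepsilon^\gamma}P_s^{(m)}\le\varepsilon\big)\le m\,\bbP_0\big(X_1\le\varepsilon^{1/m-\gamma}\big)\le mC\,\varepsilon^{(1/m-\gamma)\delta/2},
\]
a sum rather than a product, but still $o(1)$; iterating over $\varepsilon^{-\gamma}$ blocks then yields the stretched-exponential bound you want.

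The paper's own proof of~(2) is structurally quite different. It observes from \eqref{eq:ito_produit} that $P^{(m)}$ is itself a \emph{time-changed squared Bessel process} of dimension $\delta$, with time change $\rho_t=\int_0^t\sum_i\prod_{j\neq i}X_u^{(j)}\,\dd u$, and proceeds by \emph{induction on $m$}: since $\rho_1\ge\int_0^1 P_s^{(m-1)}\,\dd s$, the inductive hypothesis~(3) at level $m-1$ controls $\bbP(\rho_1\le\sqrt\varepsilon)$, while $\bbP(\sup_{[0,\sqrt\varepsilon]}Q^{(\delta)}\le\varepsilon)$ reduces by scaling to the one-dimensional small-ball bound~\eqref{eq:supX}. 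Thus in the paper, (3) at level $m-1$ feeds into~(2) at level~$m$, which then feeds into~(3) at level~$m$; this avoids the coordinate-factorization pitfall above, at the cost of a less direct inductive structure.
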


\begin{proof}
First of all, let us simplify notation and write $P_t := P_t^{(m)}$ and $\bbP:=\bbP_0$.
To prove the first point, it suffices to show (by the Kolmogorov criterion, see \cite[Thm.~2.1]{RY99}) that for all $p\geq 1$,
\begin{equation}
\label{eq:Kolmogorov-criterion}
\forall 0\leq s<t\leq 1,  \quad \bbE \big[ \vert P_t-P_s\vert^p \big]\leq c_p \vert t-s \vert^{p/2} \,.
\end{equation}
From the Itô formula, we have that
\begin{equation}\label{eq:ito_produit}
 P_t-P_s= 2\int_s^t \sum_{i=1}^{m} \sqrt{X_u^{(i)}}\prod_{j\neq i} X_u^{(j)} \dr W^{(i)}_u + \delta \int_s^t \sum_{i=1}^{m}\prod_{j\neq i} X_u^{(j)} \dr u \,, 
\end{equation}
and using Burkholder-Davis-Gundy's inequality (see e.g.\ \cite[Thm.~4.1]{RY99}), we get that 
\[
\bbE \big[ \vert P_t-P_s\vert^p \big]\leq c_p' \bbE \Big [ \Big ( \int_s^t \sum_{i=1}^{m} X_u^{(i)}\prod_{j\neq i} \big (X_u^{(j)} \big)^2\dr u\Big)^{p/2} \Big ]+ c_p'' \bbE \Big[ \Big (\int_s^t \sum_{i=1}^{m}\prod_{j\neq i} X_u^{(j)} \dr u\Big)^p \Big] \,.
\]
We finally obtain~\eqref{eq:Kolmogorov-criterion} by dominating, into the integrals, all the $X^{(i)}$ by their supremum on $[0,1]$ (which are independent and admit a moment of order $p$ for all $p\geq1$).

We now prove points \textit{(2)}-\textit{(3)} by iteration on $m\geq 1$. The case $m=1$ has already been treated in Section~\ref{sec:chgt-temps}, so we now take $m\geq 2$ and suppose that points \textit{(2)}-\textit{(3)} hold for $m-1$.

Let us start to show that point~\textit{(2)} holds for $m$.
First of all, observe from \eqref{eq:ito_produit} that $P_t$ is a time-changed square Bessel process $Q^{(\delta)}$ of dimension $\delta$, \textit{i.e.}\ $P_{\tau_t}= Q^{(\delta)}_{t}$, where $\tau_t$ is the inverse of
\[
\rho_t :=\int_0^t \sum_{i=1}^{m-1} \prod_{j\neq i} X_u^{(j)} \dr u \,.
\] 
Then we can write
\begin{align*}
\bbP \Big(  \sup_{[0,1]}P_s \leq \gep \Big) =\bbP\Big( \sup_{[0,\rho_1]} Q^{(\delta)}_s \leq \gep \Big) \leq \bbP\big(\rho_1 \leq \sqrt{\gep} \big) + \bbP\Big( \sup_{[0, \sqrt{\gep}]} Q^{(\delta)}_s \leq \gep \Big) \,.
\end{align*}
By scaling, the second term equals $\bbP(\sup_{[0, 1]} Q^{(\delta)}_s \leq \sqrt{\gep})$ which is bounded by $e^{-c \sqrt{\gep}}$ as seen in~\eqref{eq:supX}. 
Moreover, since $\rho_1$ is the integral of products of $m-1$ independent Bessel processes, one can use point \textit{(3)} with $m-1$ to get that, for any $p\geq 1$, we have $\bbP(\rho_1 \leq \sqrt{\gep} ) \leq C_p' \gep^p$.
This proves point \textit{(2)} for $m$.

We now turn to point \textit{(3)}.
Let us denote
\[
P^*:=\sup_{s\in[0,1]} P_s, \quad \mathrm{and} \quad K:=\sup_{0\leq s<t\leq 1}\frac{\vert P_t-P_s\vert}{\vert t-s\vert^{1/4}} \,,
\]
and let $t^*\in [0,1]$ be such that $P^*=P_{t^*}$. Since by definition of $K$ we have $ P_t\geq P^*-K \vert t-t^*\vert^{1/4}$, we obtain that for all $\gep>0$,
\[
\int_0^1 P_s \dr s \geq \int_{0}^{1} \ind_{\{\vert s-t^*\vert \leq \frac12\gep^{7/8} \}} P_s \dr s \geq \gep^{7/8}P^*-K \gep^{35/32} \geq \gep^{7/8}P^*-K \gep^{9/8} \,.
\]
We therefore get that
\begin{align*}
\bbP \Big(\int_0^1 P_s \dr s\leq \gep \Big) &\leq \bbP\big( P^* \leq 2 \gep^{1/8} \big)+\bbP\big( P^* \geq 2 \gep^{1/8}, \gep^{7/8}P^*-K \gep^{9/8}\leq \gep \big) \\
&\leq \bbP( P^* \leq 2 \gep^{1/8})+\bbP ( K\geq \gep^{-1/8}) \,.
\end{align*}
Hence, using point \textit{(2)} for the first probability and Markov's inequality and point \textit{(1)} for the second one, we obtain that for any $p\geq 1$, both terms are bounded by $C_p\gep^p$.
This proves point~\textit{(3)} for $m$, concludes the recursion and proves the Lemma.
\end{proof}

\begin{appendix}

\section{Calculations from Section \ref{sec:lower_bound}}\label{app:details_calculs}

In this section, we give some details on the tedious calculations from Section~\ref{sec:lower_bound}. We recall that the function $\varphi$ is defined as
$\varphi(x) = S^a\big(A - \frac{P}{S}\big)$
where $a \in[0,1]$ and
\[
 S = x_1 + x_2 + x_3, \qquad A = x_1x_2 + x_2x_3 + x_3x_1, \qquad P = x_1x_2x_3.
\]
Our aim here is to show the three following identities
\begin{align}
 \sum_{i=1}^3 & \frac{\partial \gp}{\partial x_i}  =  S^{a-1} \Big( 2S^2 + (3a-1) A + 3(1-a) \frac{P}{S} \Big) \,, \label{eq:first_ident} \\
 \sum_{i=1}^3 & x_i \frac{\partial^2 \gp}{\partial x_i^2}  = S^{a-1} \Big(  a(a+3) A + (1-a)(a+4) \frac{P}{S} \Big)  \,, \label{eq:sec_ident} \\
 \sum_{i=1}^3 & x_i \Big(\frac{\partial \gp}{\partial x_i} \Big)^2  = S^{2a-1} \Big(A- \frac{P}{S} \Big) \Big( S^2 + a(a+4)A + (1-a)(a+5) \frac{P}{S}   \Big) \label{eq:third_ident} \,.
\end{align}
Since $\varphi$ is a function of $(S, A, P)$, we rely on the chain-rule formula to compute the derivatives of $\varphi$. More precisely, we have
\[
 \frac{\partial \gp}{\partial x_i} = \frac{\partial \gp}{\partial S}\frac{\partial S}{\partial x_i} +\frac{\partial \gp}{\partial A}\frac{\partial A}{\partial x_i} + \frac{\partial \gp}{\partial P}\frac{\partial P}{\partial x_i} =: H_i + F_i + G_i \,,
\]
where
\[
 H := H_i = aS^{a-1}A + (1 - a)S^{a-2}P, \quad F_i = S^a(x_{i+1} + x_{i+2}), \quad G_i = -S^{a-1}x_{i+1}x_{i+2}.
\]
Here, we used the convention that $x_4 = x_1$ and $x_5 = x_2$. Regarding \eqref{eq:first_ident}, we get
\[
 \sum_{i=1}^3 \frac{\partial \gp}{\partial x_i}  = 3 H + 2S^{a +1} - S^{a-1}A = S^{a-1} \Big( 2S^2 + (3a-1) A + 3(1-a) \frac{P}{S} \Big)
\]
Let us now compute \eqref{eq:third_ident}. We start by writing
\[
 \sum_{i=1}^3 x_i \Big(\frac{\partial \gp}{\partial x_i} \Big)^2 = \sum_{i=1}^3 x_i \Big(H^2 + F_i^2 + G_i^2 + 2HF_i + 2HG_i + 2F_i G_i \Big).
\]
Then, computing carefully all of the above six terms, we see that
\[
 \sum_{i=1}^3 x_i H^2 = S^{2a-1}\Big(a^2A^2 + 2a(1-a)A\frac{P}{S} + (1-a)^2 \frac{P^2}{S^2}\Big), \quad \sum_{i=1}^3 x_iF_i^2 = S^{2a-1} S^2\Big(A + 3\frac{P}{S}\Big),
\]
and
\[
 \sum_{i=1}^3 x_iG_i^2 = S^{2a-1}A\frac{P}{S}, \qquad \sum_{i=1}^3 2x_iHF_i = 4S^{2a - 1} \Big(aA^2 + (1-a)A \frac{P}{S}\Big)
\]
and
\[
 \sum_{i=1}^3 2x_iHG_i = -6 S^{2a-1}\Big(aA\frac{P}{S} + (1-a)\frac{P^2}{S^2}\Big), \qquad \sum_{i=1}^3 2x_iF_iG_i = -4S^{2a-1}S^2 \frac{P}{S}.
\]
Recombining all the terms, one can easily check that \eqref{eq:third_ident} holds. Let us now compute the second derivatives of $\gp$. Differentiating in chain with respect to $(S, A, P)$, we get
\[
 \frac{\partial H}{\partial x_i} = a(a-1)S^{a-2}A +aS^{a-1}(x_{i+1} + x_{i+2}) + (1-a)(a-2)S^{a-3}P  + (1-a)S^{a-2}x_{i+1}x_{i+2}
\]
and
\[
 \frac{\partial F_i}{\partial x_i} = aS^{a-1}(x_{i+1} + x_{i+2}), \qquad \frac{\partial G_i}{\partial x_i} = (1-a)S^{a-2}x_{i+1}x_{i+2}.
\]
With these identities at hand and since $\frac{\partial^2 \gp}{\partial x_i^2} = \frac{\partial H}{\partial x_i} + \frac{\partial F_i}{\partial x_i} + \frac{\partial G_i}{\partial x_i}$, one gets that
\[
\sum_{i=1}^3 x_i  \frac{\partial^2 \gp}{\partial x_i^2}= S^{a-1} \Big( a(a-1)A +2a A +(1-a)(a-2)\frac{P}{S}+3(1-a)\frac{P}{S}  + 2a A + 3 (1-a)  \frac{P}{S}\Big) \,,
\]
which concludes that~\eqref{eq:sec_ident} holds.

\section{Relation with a spectral problem on a bounded domain}
\label{app:spectralpb}

We now come back to the last discussion in Section~\ref{sec:PDEs} about the relation with a spectral problem for a certain operator on a bounded domain.
Recall the definitions of the symmetric polynomials
\[
S_t:= X_t^{(1)} + X_t^{(2)} +X_t^{(3)}\,,\quad A_t:=X_t^{(1)} X_t^{(2)} + X_t^{(2)} X_t^{(3)} +  X_t^{(3)} X_t^{(1)}\,,\quad
P_t:= X_t^{(1)} X_t^{(2)} X_t^{(3)}\,,
\] 
and that $H_3 = \inf\{t\geq 0, A_t=0\}$.
The generator $\tilde{\mathcal{L}}$ of $(S_t,A_t,P_t)_{t\geq 0}$ can be computed explicitly and (after calculation) it can be expressed as follows: for a $C^2((\bbR_+^*)^3)$ function $\psi(s,a,p)$ with compact support,
\begin{equation}
\label{eq:generatorAPT}
\begin{split}
\tilde{\mathcal{L}}\psi = 2s\partial_{ss}^2 \psi +2(sa+3p)\partial_{aa}^2\psi+ 2ap\partial_{pp}^2\psi+8a\partial_{sa}^2\psi +& 12p\partial_{sp}^2\psi + 8 ps\partial_{ap}^2\psi\\
&+ 3\delta\partial_s \psi+ 2\delta s\partial_a \psi+ \delta a \partial_p \psi \,.
\end{split}
\end{equation} 

We can now \textit{factorize} the dynamics between a ``radial'' process $(S_t)_{t\geq 0}$ and an ``angular'' process $(\bar{A}_t, \bar{P}_t):=(A_t/S_t^2, P_t/S_t^3)$, similarly to what is done in Section~\ref{sec:twoBessels}.
Indeed, if in~\eqref{eq:generatorAPT} we take a function $\psi$ of the form $\psi(s,a,p)=\phi(s)\varphi(a/s^2,p/s^3)$, we obtain after calculations  that
\begin{equation}
\label{split}
\tilde{\mathcal{L}}\psi(s,a,p)= \varphi(a/s^2,p/s^3)\mathcal{L}^{1}_{3\delta}\phi(s)+\frac{\phi(s)}{s}\bar{\mathcal{L}}\varphi(a/s^2,p/s^3),
\end{equation}
where $\mathcal{L}^{1}_{3\delta}$ is the generator of a Bessel process of dimension $3\delta$ in $\bbR_+$ and $\bar{\mathcal{L}}$ is given by
\begin{multline*}
\bar{\mathcal{L}}\varphi:=  2(u(1-4u)+3v)\partial_{uu}^2 \varphi+2v(u-9v)\partial_{vv}^2+8v(1-3u)\partial^2_{uv} \\
+2\big ( \delta(1-3u)-2u \big)\partial_u \varphi+\big(\delta(u-9v)-12v\big)\partial_v \varphi \,.
\end{multline*}
We stress that this decomposition makes sense for $S_0\neq 0$ and for times $t<\inf\{u\geq 0, S_u=0\}$; in particular it makes sense before the hitting time $H_3$ (since $S_t=0$ implies that $A_t=0$).

Splitting $\tilde{\mathcal{L}}$ as in \eqref{split} means that the angular process $(\bar{A}_t, \bar{P}_t)$ is a time-changed (by $\int_0^t S_u^{-1} \dr u$)  diffusion $(U_t,V_t)$ generated by $\bar{\mathcal{L}}$ and independent of $(S_t)_{t\geq0}$.
Now, one can check that the angular process $(\bar{A}_t, \bar{P}_t)_{t\geq 0}$ evolves in a \textit{bounded} domain of $(\bbR_+)^2$ (with boundary), which is determined by computing the determinant of the principal symbol of $\bar{\mathcal{L}}$. After calculations, one can verify that the angular process lives in a ``curved'' triangle $\bar{\mathcal{T}}$ described by 
\[
\bar{\mathcal{T}}:=\big\{ (a,p)\in [0,+\infty[^2:  p(-4  a^3 + a^2 + 18  a  p - 4 p - 27 p^2) \geq 0 \big\} \,.
\]
We provide in Figure~\ref{fig:domain} an illustration of the domain $\bar{\mathcal{T}}$.

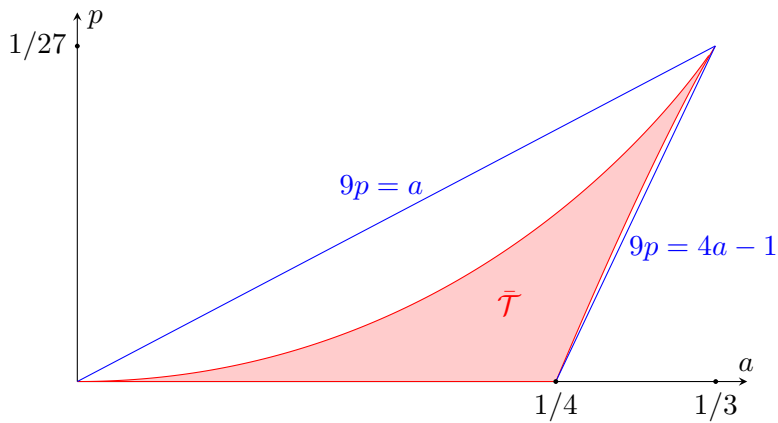
\begin{figure}
\begin{center}
\begin{tikzpicture}[thick,scale=0.8]
   \begin{axis}[
            xmin=0, xmax=1.05, 
            ymin=0, ymax=1.1,
            axis x line = center, 
            axis y line = center,
            xtick = \empty,
            ytick = \empty,
            width=12.6cm,
			height=7.7cm,
            ] 
       \addplot[red, name path=A, samples=100, domain=0.75:1] {3*x-2-2*sqrt(1-3*x+3*pow(x,2)-pow(x,3)))};
       \addplot[red, name path=B,  samples=100, domain=0:1] {3*x-2+2*sqrt(1-3*x+3*pow(x,2)-pow(x,3)))};
      \addplot [red!20] fill between [of=A and B, soft clip={domain=0:1}];
       \addplot[red, samples=200, domain=0:0.75] {0};
       \addplot[blue]coordinates {(0.75,0)(1, 1)};
       \addplot[blue]coordinates {(0,0)(1,1)};
    \end{axis}
    \fill (7.87,0) circle (0.4mm) node[below] {$1/4$};
    \fill (10.5,0) circle (0.4mm) node[below] {$1/3$};
    \fill (0,5.56) circle (0.4mm) node[left] {$1/27$};
    \draw[red] (7.1,1.3) node {$\bar{\mathcal{T}}$};
    \draw[blue] (5,3.21) node {$9p=a$};
    \draw[blue] (10.3,2.2) node {$9p=4a-1$};
    \draw (11,0) node [above]{$a$};
    \draw (0,6) node [right]{$p$};
\end{tikzpicture}
\caption{\small Illustration of the domain $\bar{\mathcal{T}}$ in which the ``angular'' process $(\bar{A}_t, \bar{P}_t))_{t\geq 0}$ lives.
The hitting time $H_3$ is equal to $\inf\{t\geq 0\,, \bar A_t =0 \}$, so it is related to the hitting time of the vertex $(0,0)$ in $\bar{\mathcal{T}}$.}
\label{fig:domain}
\end{center}
\end{figure}

Now we can relate our question about the persistence exponent $\theta_3$ to a spectral problem for the generator $\bar{\mathcal{L}}$ on the bounded domain $\bar{\mathcal{T}}$, in the following way.
The idea is to find a non-negative function $\psi$  which is null only when $a=0$ and which is $\tilde{\mathcal{L}}$-harmonic, \textit{i.e.}\ such that $\tilde{\mathcal{L}}\psi=0$. 
With this function~$\psi$ at hand, one obtains that $\psi(S_t,A_t,P_t)$ is a time-changed Brownian motion\footnote{Note that our strategy, outlined in Section~\ref{sec:strategy}, was to compare the functional $Z_t=\varphi(X_t)$ to a time-changed Bessel process (rather than a Brownian motion here).}, which hits $0$ only when $X_t$ hits $\mathcal{A}$.
In view of the factorization property described above, we can look for $\psi$ in the form $\psi(s,a,p)= s^{\theta}\varphi(a/s^2,p/s^3)$ (with $\theta$ and $\varphi$ to be determined): by the splitting~\eqref{split} given above, $\psi$ is $\tilde{\mathcal{L}}$-harmonic if and only if $\varphi$ verifies the eigenvalue problem $\bar{\mathcal{L}}\varphi= \mu \varphi$, where $\mu$ is such that $\theta(\theta-1+3\delta/2)+\mu=0$.
All together, one ends up with the following time-changed Brownian motion:
\[
\psi(S_t,A_t,P_t) = S_t^{\theta}\varphi(\bar{A}_t, \bar{P}_t )= \mathcal{B}_{\rho_t}\,,
\]
where $\rho_t$ is given by 
$t\mapsto\rho_t = \int_0^t 4 S_r^{2\theta-1}g(\bar{A}_r, \bar{P}_r) \dr r$, and with
\begin{multline*}
g(u,v):=\theta^2 \varphi(u,v)^2 + (u(1-4u)+3v) (\partial_u \varphi (u,v) )^2 \\
+ (u-9v)v (\partial_v \varphi (u,v) )^2 +4 v(1-3u)\partial_u \varphi (u,v) \partial_v \varphi (u,v) \,.
\end{multline*}
Then, we have $\bbP(H_3>t)=\bbP(T^\mathcal{B} > \rho_t)$ where $T^\mathcal{B}$ is the hitting time of $0$ by a Brownian motion~$\mathcal{B}$. Since $\rho_t$ scales like $t^{2\theta}$, one should get (after controlling the time-change), that $\bbP(H_3>t)$ behaves like $t^{-\theta}$ as $t\to \infty$.

To summarize, if one finds $(\mu, \varphi)$ such that $\bar{\mathcal{L}}\varphi= \mu \varphi$ with a function $\varphi : \bar{\mathcal{T}} \to \bbR_+$ which vanishes only when the first coordinate is zero, then applying our strategy one should obtain that the persistent exponent~$\theta_3$ is the solution of $\theta_3(\theta_3-1+3\delta/2)+\mu=0$.

\end{appendix}

\paragraph*{Acknowledgements.}
The authors are grateful to Frank den Hollander for suggesting (and discussing) this apparently simple but challenging question.
We also would like to thank Nicolas Fournier for many enlightening discussions. 
Q.B.\ acknowledges the support of Institut Universitaire de France and ANR Grant Local (ANR-22-CE40-0012).
L.B.\ is funded by the ANR Grant NEMATIC (\href{https://anr.fr/Projet-ANR-21-CE45-0010}{ANR-21-CE45-0010}).

\bibliographystyle{abbrv}
\bibliography{biblio.bib}

\end{document}